\DeclareMathOperator{\diag}{diag}
\newcommand\cocoa{{\hbox{\rm C\kern-.13em o\kern-.07em C\kern-.13em o\kern-.15em A}}}
\newcommand{\design}{\mathcal D}
\newcommand{\expect}{\operatorname{\mathbb E}}
\newcommand{\expectof}[1]{\expect\left(#1\right)}
\newcommand{\normof}[1]{\left\Vert#1\right\Vert}
\newcommand{\reals}{\mathbb R}
\newcommand{\scalarat}[3]{\left\langle#2,#3\right\rangle_{#1}}
\begin{document}

\title{The algebra of interpolatory cubature formul{\ae} for generic nodes}
\titlerunning{Algebra of Cubature}

\author{Claudia Fassino \and Giovanni Pistone \and Eva Riccomagno}

\authorrunning{C Fassino \and G Pistone \and E Riccomagno}

\date{Submitted \today}

\institute{Claudia Fassino\and Eva Riccomagno \at Dipartimento di Matematica, Universit\`a di Genova, \\
Via Dodecaneso 35, 16141 Genova, Italy \\ \email{fassino@dima.unige.it} \\   \email{riccomagno@dima.unige.it} \and Giovanni Pistone \at Collegio Carlo Alberto, Via Real Collegio 30, 10024 Moncalieri, Italy \\ \email{giovanni.pistone@carloalberto.org} 
%\and E Riccomagno \at Dipartimento di Matematica, Universit\`a di Genova, Via Dodecaneso 35, 16141 Genova, Italy \\ \email{riccomagno@dima.unige.it}
}

\maketitle

\begin{abstract}
We consider the classical problem of computing the expected value of a real function $f$ of the $d$-variate random variable $X$ using cubature formul\ae. 
We use in synergy tools from Commutative Algebra for cubature rul\ae, from elementary orthogonal polynomial theory and from Probability. 
\end{abstract}

\keywords{Design of experiments \and Cubature formul{\ae} \and Algebraic Statistics \and Orthogonal polynomials \and Evaluation of expectations}

%%%%%%%%%%%%%%%%%%%%%%%%%%%%%%%%%%%%%%%%%%%%%%%%%%%%%%%%%%%%%%%%

\section{Introduction} 
% Da fare:
%Abstract
%INTRODUZIONE Da riscrivere ed aggiornare
%GRADO DI FORMULA DI CUBATURA
%DIMOSTRAZIONI
%TEOREMA RESTO DEL RESTO DEL RESTO

 Consider the classical problem of computing the expected value of a real function $f$ of the $d$-variate random variable $X$ as a linear combination of its values $f(z)$ at a finite set of points $z \in \design \subset \reals^d$. The general cubature problem is: determine classes of functions $f \colon \reals^d \to \reals$, finite set of $n$ \emph{nodes} $\design \subset \reals^d$ and positive weights $\{\lambda_z\}_{z \in \design}$ such that
% %
 \begin{align}\label{eq:intro}
 \expect(f(X))= \int_{\reals^d}f(x)\, d\lambda(x) = \sum_{z \in \design} f(z) \lambda_z
 \end{align}
  where $\lambda$ is the probability distribution of the random vector $X$. In the univariate case, $d=1$, the set $\design$ is the set of zeros of a \emph{node polynomial}, e.g. the $n$-th orthogonal polynomial for $\lambda$, see e.g. \cite[Sec.~1.4]{MR2061539}. Not much is known in the multivariate case, unless the set of nodes is product of one-dimensional sets. 

 A similar setting appears in statistical Design of Experiment (DoE) where one considers a finite set of \emph{treatments} $\design$ and the experimental outputs as function of the treatment. The set of treatments and the set of nodes are both described efficiently as zeros of systems of polynomial equations, i.e. as what is called a 0-\emph{dimensional variety} in Commutative Algebra. This framework is systematic for Algebraic Statistics where  tools from modern Computational Commutative Algebra are used to address problems in statistical inference and modelling, see e.g. 
 \cites{DrtonSullivantSturmfelsLectureNotes,MR2332740,MR2640515}. In DoE the set $\design$ is called a \emph{design} and the \emph{affine} structure of the ring of real functions on $\design$ is analyzed in detail because it represents the set of real responses to treatments in $\design$. However,  in the algebraic setting the \emph{euclidean} structure, such as the computation of mean values, is missing. In algebraic design of experiment the computation of mean values has been obtained by considering very special sets called \emph{factorial designs}, e.g. $\{+1,-1\}^d$, see e.g. \cite{MR1772046} and \cite[Ch.~5]{MR2332740}. Note that $\{+1,-1\}$ is the zero set of the polynomial $x^2-1$. 

 The purpose of the present paper is to discuss how the above comes together by considering orthogonal polynomials. In particular, we consider algorithms from Commutative Algebra for the cubature problem in \eqref{eq:intro} by mixing tools from elementary orthogonal polynomial theory and Probability. Vice versa, Formula \eqref{eq:intro} provides an interesting interpretation of the RHS term as expected value.

 We proceed by steps of increasing degree of generality. 
 In Section~\ref{section:OrthogonalPolynomialsOnTheLine} we consider the univariate case and take $\lambda$ to admit an orthogonal system of polynomials. Let $g(x) = \prod_{z\in \design}(x-z)$
 and by univariate division given a polynomial $p$ there exist unique $q$ and $r$ such that $p=q \ g+r$ and $r$ has degree smaller than the number of points in $\design$, that is the degree of $g$. Furthermore, $r$ can be written as $\sum_{z\in \design} r(z) l_z(x)$ where $l_z$ is the Lagrange polynomial for $z\in \design$. Then we show that
 \begin{enumerate}
 \item the expected values of $p$ and $r$ coincide if and only if the $n$-coefficients of the Fourier expansion of $q$ with respect to the orthogonal polynomial system is zero, 
 \item the weights $\lambda_z$ in \eqref{eq:intro} are the expected values of the Lagrange polynomials $l_z$, for $z\in \mathcal D$.
 \end{enumerate}
 The case when the design $\mathcal D$ is a proper subset of the zero set of the $n$-th orthogonal polynomial is developed in Section~\ref{fraction_section}.
 
In Section~\ref{Hermite:section}, $\lambda$ is a standard Gaussian probability law and $\design$ the zero set of the $n$-th Hermite polynomial $H_n$.
By applying Stein-Markov theory we give  
 a representation of some Hermite polynomials, including those of degree $2n - 1$, as
 sum of an element in the polynomial ideal generated by $H_n$  and of a reminder. See Theorem \ref{productHermite} and the following discussion, in particular Equation~\eqref{peccati} which, unsurprisingly, is reminiscent of other formul\ae \, for iterated It\={o} integrals e.g.~\cite[Eq.~(6.4.17)]{peccatitaqqu}.
 %% This suggests a folding of  multivariate polynomials over a finite set of points. 
 The point is to describe a ring structure of the space generated by Hermite polynomials up to a certain order.
 This ring structure is essentially the aliasing on functions induced by limiting observations to $\design$. 
 The particular form of the recurrence relationship for Hermite polynomials makes this possible and we suspect that the study of the ring structure over $\design$ for other systems of orthogonal polynomials will require different tools from those we use here. 
 
 This result implies a system of equations in Theorem~\ref{weighing_theorem} (extended to the multidimensional case in Section~\ref{section:HigherDimension})
 which gives an implicit description of design and weights via two polynomial equations. 
 We envisage applicability of this in the choice of $\design$ for suitable classes of functions but have not developed this here.    
 
 Section~\ref{section:genericNoRing} contains our most general set-up: we restrict  ourselves to product probability measures on $\reals^d$ but consider any set of $n$ distinct points in $\reals^d$.
 Then a Buchberger-M\"oller type of algorithm is provided that works exclusively over vector-space generated by orthogonal polynomials up to a suitable degree. It gives a generating set of the vanishing ideal of $\design$ expressed in terms of orthogonal polynomials. 
 %% This is used to compute the coefficients of \eqref{eq:intro} in the Fourier expansion of the interpolatory polynomial at $\design$ of the function whose expectation is wanted. 
 This is used to determine sufficient and necessary conditions on a polynomial function $f$ for which (1) holds for suitably defined weights. 
 Furthermore, exploiting the Fourier expansion of a Gr\"obner basis of the vanishing ideal of $\design$, some results about the exactness of the cubature formul\ae \, are shown.
 Of course it will be of interest to determine generalisations of our results to the cases where $\lambda$ is not a product measure and still admits an orthogonal system of polynomials.  
   
% %%%%%%%%%%%%%%%%%%%%%%%%%%%%%%%%%%%%%%%%%%%%%%%%%%%%%%%%%%%%%%%% 

\subsection{Basic commutative algebra}\label{section:generalframework}

We start with some notation on polynomials: $\reals[x]$ is the ring of polynomials with real coefficients and in the $d$-variables (or indeterminate) $x=(x_1,\ldots,x_d)$; for $\alpha=(\alpha_1,\ldots,\alpha_d)$ a $d$-dimensional vector with non-negative integer entries, $x^\alpha=x_1^{\alpha_1} \ldots x_d^{\alpha_d}$ indicates a monomial; $\tau$ indicates a term-ordering on the monomials in $\reals[x]$. % i.e., a total well ordering on $\mathbb Z_{\geq 0}^d$ such that $x^\alpha \tau x^\beta$ if $x^\alpha$ divides $x^\beta$.  
 If $d=1$ there is only one term ordering, this is  not the case for $d \ge 2$. Designs of product form share some commonalities with the one dimension case. Because of this, term orders are not much used in standard quadrature theory. We will see that refining the division partial order to a proper term-order is actually relevant in some, but not all, multivariate cases.

The total degree of the monomials $x^\alpha$ is $\sum_{i=1}^d \alpha_i$. The symbol $\reals[x]_k$ indicates the set of polynomials of total degree $k$ and $\reals[x]_{\leq k}$ the vector space of all polynomials of at most total degree $k$. Let $\design$ be a finite set of distinct points in $\reals^d$, $\lambda$ a probability measure over $\reals^d$ and $X$ a real-valued random vector with probability distribution $\lambda$ so that the expected value of the random variable $f(X)$ is $\expect(f(X)) = \int f(x) d\lambda(x)$. 

%%%%%%%%%%%%%%%%%%%%%%%%%
Given a term ordering $\tau$,  let $f_1,\ldots,f_t\in \reals[x]$ form a \emph{Gr\"ob\-ner basis} with respect to  $\tau$ (see~\cite[Ch. 2]{MR2290010}) of the ideal $\mathcal I(\mathcal D)$ of polynomials vanishing over $\design$. For each $p\in \reals[x]$ there exist $h_i$, $i=1,\dots,t$ and an unique $r\in \reals [x]$ such that
\begin{align}  \label{eq:1}
p(x) &= \sum_{i=1}^t h_i(x)f_i(x) + r(x), 
\end{align}
and $r$ has its largest term in $\tau$ not divisible by the largest term of $f_i$, $i=1,\ldots,t$. 
Note that the $h_i(x) \in \reals[x]$ are \emph{not} necessarily unique. For all $p\in \reals[x]$, the polynomial $r$ above is referred to as \emph{reminder} or \emph{normal form}. It is often indicated with the symbol $\operatorname{NF}_\tau(p,\{ f_1,\ldots,f_t \})$, or the shorter version $\operatorname{NF}(p)$, while $\langle f_1,\ldots,f_t \rangle$ indicates the polynomial ideal generated by $f_1,\ldots,f_t$. 
Moreover, monomials not divisible by the largest terms of $f_i$, $i=1,\ldots,t$, form a vector basis of monomial functions for the vector space  $\mathcal L(\design)$ of real functions on $\design$.  
The polynomials $g=\sum_{i=1}^t h_i f_i$ and $r$ in \eqref{eq:1} are fundamental in the applications of Algebraic Geometry to finite spaces. 
Various general purpose softwares, including Maple, Mathematica, Matlab and computer algebra softwares, like \cocoa, Macaulay, Singular, allow manipulation with polynomial ideals, in particular compute reminders and monomial bases.

The polynomial  $r(x)$ in \eqref{eq:1} can be written uniquely as
\begin{equation}\label{indicator-reminder}
r(x) = \sum_{z\in \design} p(z) l_z(x)  
\end{equation} 
where $l_z$ is the indicator polynomial of the point $z$ in $\design$, i.e for $x\in \design$ it is $l_z(x)=1$ if $x=z$ and $l_z(x)=0$ if $x\neq z$. Equation~(\ref{indicator-reminder}) follows from the fact that $\{ l_z: z\in \design\}$ is a $\reals$-vector space basis of $\mathcal L(\design)$. 

The expected value of the random polynomial function $p(X)$ with respect of $\lambda$ is 
\begin{equation*}
\expect\left( p(X) \right) = \expect\left( g(X) \right) + \expect\left( r(X) \right) = 
\expect\left( g(X) \right) + \sum_{z\in \design} p(z) \expect\left(l_z(X) \right) 
\end{equation*}
by  linearity. In this paper we discuss classes of polynomials $p$ and design points $\design$ for which 
\begin{equation*}
\expect\left( p(X) \right) =  \sum_{z\in \design} p(z) \expect\left( l_z(X) \right) 
\end{equation*}
equivalently $\expect\left( g(X) \right)=0$.

In one dimension, 
the polynomial $f$ vanishing over $\design$ and of degree $n=|\design|$ forms
a Gr\"obner basis for $\mathcal I(\mathcal D)$.
Here $|A|$ indicates the number of elements of a set $A$. 
 Furthermore, $r$ satisfies three main properties:
\begin{enumerate}
\item  $r$ is a polynomial of degree less or equal to $n-1$,
\item $p(x)=g(x)+r(x)=q(x) f(x)+r(x)$ for a suitable $q\in \reals[x]$ and $g,f\in \langle f\rangle$, 
\item $r(x)=p(x)$ if $x$ is such that $f(x)=0$. Such an $x$ is in $\mathcal D$.
\end{enumerate}
In Section 2 we consider the algebra of orthogonal polynomials in one variable.

%%%%%%%%%%%%%%%%%%%%%%%%%%%%%%%%%%%%%%%%%%%%%%%%%%%%%%%%%%%%%%%%

\section{Orthogonal polynomials and their algebra}%2
\label{section:OrthogonalPolynomialsOnTheLine}
In this section let $d=1$ and $\design$ be the zero set of a polynomial which is orthogonal to the constant functions with respect to $\lambda$.  We next recall the basics on orthogonal polynomials we use, see e.g. \cite{MR2061539}.

Let $I$ be a finite or infinite interval of $\reals$ and $\lambda$ a positive measure over $I$ such that all moments $\mu_j=\int_I x^j \, d\lambda(x)$, $j=0,1,\ldots$, exist and are finite. In particular, each polynomial function is square integrable on $I$ and the $L^2(\lambda)$ scalar product is defined by
\begin{equation*}
\scalarat \lambda {f}{g} = \int_I f(x)g(x)\,d\lambda(x)  
\end{equation*}

We consider only $\lambda$ whose related inner product is definite positive, i.e. $||f|| = \sqrt{ \scalarat \lambda {f}{f} }>0$ if $f\neq 0$. In this case there is a unique infinite sequence of monic orthogonal polynomials with respect to $\lambda$ and we denote them as
\begin{equation*}
\pi_0(x) =1,\, \pi_1(x) = x + \cdots, \, \pi_2(x) = x^2 + \cdots, \, \ldots
\end{equation*}

Furthermore we have $\pi_k\in \reals[x]_k$; $\pi_0,\ldots,\pi_k$ form a real vector space basis of $R[x]_{\leq k}$; $\pi_k$ is orthogonal to all polynomials of total degree smaller than $k$;  for $p\in \reals[x]$ and $n\in \mathbb Z_{\geq 0}$ there exists unique $c_n(p)\in \reals$, called 
$n$-th Fourier coefficient of $p$, such that $p(x) = \sum_{n=0}^{+\infty} c_n(p) \pi_n(x)$ and only a finite number of $c_n(p)$ are not zero.

Since the inner product satisfies the \emph{shift property}, i.e. 
\begin{equation*} \scalarat \lambda {xp(x)} {q(x)} = \scalarat \lambda {p(x)} {xq(x)} \text{ for } p,q\in\mathbb R[x]
\end{equation*}
then the corresponding orthogonal polynomial system satisfies a \emph{three-term recurrence} relationship.  More precisely, all orthogonal polynomial systems on the real line satisfy a three-term recurrence relationships. Conversely, Favard's theorem holds~\cite{MR1808151}.

\begin{theorem}[Favard's theorem] \label{Favard} Let $\gamma_n,\alpha_n,\beta_n$ be sequences of real numbers and for $n\geq 0$ let
  \begin{equation*}
    \pi_{n+1}(x)=(\gamma_nx-\alpha_n) \pi_n(x)-\beta_n \pi_{n-1}(x)
  \end{equation*}
be defined recurrently with $\pi_0(x)=1$, $\pi_{-1}(x)=0$. The $\pi_n(x)$, $n=0,1,\ldots$ form a system of orthogonal polynomials if and only if $\gamma_n\neq 0$, $\alpha_n\neq 0$ and $\alpha_n \gamma_n \gamma_{n-1}>0$ for all $n\geq 0$. If $\gamma_n=1$ for all $n$ then the system is of monic orthogonal polynomials.  \end{theorem}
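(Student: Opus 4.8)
The plan is to prove both implications through the standard device of a \emph{moment functional}. First I would record the structural fact that the leading coefficient of $\pi_n$ equals $\prod_{j=0}^{n-1}\gamma_j$, so that the hypothesis $\gamma_j\neq 0$ for all $j$ is exactly what guarantees $\deg\pi_n=n$; under it $\{\pi_0,\dots,\pi_k\}$ is an $\reals$-vector space basis of $\reals[x]_{\le k}$ for every $k$, which is what makes the recurrence usable as a change of basis. The monic assertion is then immediate: if every $\gamma_n=1$ the leading coefficient is $1$, so each $\pi_n$ is monic.

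For the sufficiency direction I would define a linear functional $L\colon\reals[x]\to\reals$ on this basis by $L(\pi_0)=1$ and $L(\pi_n)=0$ for $n\ge 1$, extended by linearity, and set $\langle p,q\rangle:=L(pq)$, a symmetric bilinear form which automatically enjoys the shift property $\langle xp,q\rangle=\langle p,xq\rangle$ since multiplication is commutative. Rewriting the recurrence as $x\pi_n=\gamma_n^{-1}(\pi_{n+1}+\alpha_n\pi_n+\beta_n\pi_{n-1})$, I would prove by induction on $j$ that $L(x^j\pi_n)=0$ whenever $n>j$: the base case $j=0$ is the definition of $L$, and the inductive step expands $x^j\pi_n=x^{j-1}(x\pi_n)$ through the recurrence so that each resulting term $L(x^{j-1}\pi_{n+1})$, $L(x^{j-1}\pi_n)$, $L(x^{j-1}\pi_{n-1})$ falls under the inductive hypothesis. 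Since $\{\pi_0,\dots,\pi_m\}$ spans $\reals[x]_{\le m}$, this yields orthogonality $\langle\pi_m,\pi_n\rangle=0$ for $m\neq n$.

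Next I would establish positive-definiteness. Pairing the recurrence for $\pi_n$ against $\pi_n$, the two cross terms vanish by orthogonality and the shift property, and rewriting $x\pi_n$ once more through the recurrence leaves the norm recursion $h_n=\tfrac{\gamma_{n-1}\beta_n}{\gamma_n}\,h_{n-1}$, where $h_n:=\langle\pi_n,\pi_n\rangle$ and $h_0=1$. Hence $h_n>0$ for all $n$ precisely when $\beta_n\gamma_n\gamma_{n-1}>0$, which is the sign condition governing orthogonality; under it the form $\langle\cdot,\cdot\rangle$ is positive definite, so the $\pi_n$ are orthogonal with positive norms. To upgrade the functional $L$ to an actual positive measure $\lambda$, as required by the definition of an orthogonal system used here, I would invoke the solution of the Hamburger moment problem, whose solvability follows from the positive-definiteness of the associated Hankel forms just established. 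This last step is the genuine obstacle: the elementary algebra above only produces a positive-definite functional, and passing from it to a representing measure rests on this external representation result rather than on the recurrence itself.

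For the converse I would assume the $\pi_n$ are orthogonal with respect to a positive measure with $h_n>0$. Then $\deg\pi_n=n$ forces every $\gamma_n\neq 0$; and since orthogonal systems on the line satisfy a three-term recurrence (as recalled before the statement), I only need to verify the coefficient signs. Expanding $x\pi_n$ in the orthogonal basis and using the shift property shows that only $\pi_{n+1},\pi_n,\pi_{n-1}$ survive, and reading the norm recursion backwards gives $\beta_n\gamma_n\gamma_{n-1}=\gamma_n^2\,h_n/h_{n-1}>0$, as claimed.
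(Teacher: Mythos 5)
The paper offers no proof of this theorem at all: Favard's theorem is stated and attributed to the reference \cite{MR1808151} (Schoutens), so there is nothing internal to compare against. Your proposal is the standard moment-functional argument and its skeleton is sound: the observation that the leading coefficient of $\pi_n$ is $\prod_{j=0}^{n-1}\gamma_j$, the definition of $L$ by $L(\pi_0)=1$, $L(\pi_n)=0$ for $n\ge 1$, the induction giving $L(x^j\pi_n)=0$ for $n>j$, the norm recursion $h_n=\tfrac{\gamma_{n-1}\beta_n}{\gamma_n}h_{n-1}$, and the honest acknowledgement that passing from a positive-definite functional to a representing measure requires the Hamburger moment problem (this is indeed the one non-elementary ingredient, and flagging it rather than hiding it is correct).

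One substantive point deserves attention. Your derivation yields the sign condition $\beta_n\gamma_n\gamma_{n-1}>0$ (equivalently $\beta_n\neq 0$ with the right sign), whereas the theorem as printed requires $\alpha_n\neq 0$ and $\alpha_n\gamma_n\gamma_{n-1}>0$. These are not the same statement, and the printed version cannot be right: the paper's own central example, the Hermite recurrence $H_{n+1}=xH_n-nH_{n-1}$, has $\alpha_n=0$ for every $n$ (as the paper itself notes in the proof of Theorem~\ref{weighing_theorem}), yet the $H_n$ are orthogonal. So the statement contains a typographical slip ($\alpha_n$ for $\beta_n$), and what you have proved is the corrected, true version --- which is the right thing to do, but you should say explicitly that this is what you are doing rather than silently substituting $\beta_n$ for $\alpha_n$ in the conclusion. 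A second, minor point: the condition is asserted ``for all $n\geq 0$,'' but at $n=0$ it involves $\gamma_{-1}$ and the coefficient $\beta_0$ multiplies $\pi_{-1}=0$, so the condition is only meaningful for $n\geq 1$; your norm recursion correctly starts at $h_1$, so no harm is done, but the range should be stated. With those two clarifications the argument is complete.
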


In the monic case,
\begin{equation*}
\alpha_k=\displaystyle\frac{\langle x \pi_k,\pi_k \rangle}{\langle \pi_k,\pi_k \rangle} \quad \text{ and } \quad
\beta_k=\displaystyle\frac{\langle \pi_k,\pi_k \rangle}{\langle \pi_{k-1},\pi_{k-1} \rangle}
\end{equation*}
hold true and therefore the norm of $\pi_n$ is computed from the $\beta$'s as $\normof{\pi_n}^2=\beta_n\beta_{n-1}\ldots \beta_0$.

For orthonormal polynomials $\widetilde{\pi}_k=\pi_k/\normof{\pi_k}$ the {Christoffel-Darboux formul{\ae}} hold
\begin{equation} \label{CD-formula}
\begin{split} 
\sum_{k=0}^{n-1}  \widetilde \pi_k(x)  \widetilde \pi_k(t) &= \sqrt{\beta_{n}} \displaystyle\frac{ \widetilde \pi_{n} (x)  \widetilde\pi_{n-1}(t) - \widetilde\pi_{n-1}(x)  \widetilde \pi_{n}(t) }{x-t}  \\ 
\sum_{k=0}^{n-1}  \widetilde \pi_k(t)^2 &= \sqrt{\beta_{n}} \left( 
\widetilde \pi_{n}^\prime (t)  \widetilde\pi_{n-1}(t) - \widetilde\pi_{n-1}^\prime(t)  \widetilde \pi_{n}(t)  
 \right)   
 \end{split}
 \end{equation}
 
\begin{example}
Inner products of the Sobolev type, namely 
$\langle u,v \rangle_S=\langle u,v \rangle_{\lambda_0} + \langle u^\prime,v^\prime \rangle_{\lambda_1}
\cdots+\langle u^{(s)}, v^{(s)} \rangle_{\lambda_s}
$ where ${\lambda_i}$ are positive measures possibly having different support, do not satisfy the shift condition. Neither do the complex Hermitian inner products.
\end{example}

\begin{theorem}\label{main_theo_onedim} 
Let $\design=\left\{ x\in \reals: \pi_n(x)=0\right\}$ be the zero set of the $n$-th orthogonal polynomial with respect to the distribution $\lambda$ of the real random variable $X$. 
Consider the division of  $p\in \mathbb R[x]$ 
 by $\pi_n$ giving $p(x) = q(x)\pi_n(x)+r(x)$ as above. 
 Then there exist weights $\lambda_z$, $z \in \design$, such that the expected value of $p(X)$ is
\begin{equation*}
\expect\left( p(X) \right) = c_n(q) \normof{\pi_n}_\lambda^2 + \sum_{z \in \design} p(z) \lambda_z \end{equation*}
where $\lambda_z = \expect\left( l_z(X) \right)$ and $c_n(q)$ is the $n$-th Fourier coefficient of the polynomial $q$. 
\end{theorem}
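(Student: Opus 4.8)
The plan is to apply linearity of expectation directly to the division identity $p(x) = q(x)\pi_n(x) + r(x)$, obtaining
\begin{equation*}
\expect\left(p(X)\right) = \expect\left(q(X)\pi_n(X)\right) + \expect\left(r(X)\right),
\end{equation*}
and then to evaluate the two terms on the right-hand side separately, matching the first with $c_n(q)\normof{\pi_n}_\lambda^2$ and the second with the weighted sum $\sum_{z\in\design} p(z)\lambda_z$. Since $\lambda$ is the distribution of $X$, every expectation here is an integral against $\lambda$, so both pieces can be handled with the $L^2(\lambda)$ structure already introduced.

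For the first term I would rewrite the expectation as an inner product, $\expect\left(q(X)\pi_n(X)\right) = \scalarat\lambda{q}{\pi_n}$, and then substitute the (finite) Fourier expansion $q = \sum_{k\geq 0} c_k(q)\,\pi_k$. By orthogonality of the system $\{\pi_k\}$ every cross term $\scalarat\lambda{\pi_k}{\pi_n}$ with $k\neq n$ vanishes, leaving only $c_n(q)\scalarat\lambda{\pi_n}{\pi_n} = c_n(q)\normof{\pi_n}_\lambda^2$. This is the cleanest step and it is exactly what singles out the $n$-th Fourier coefficient of $q$.

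For the second term the key observation is that $\design$ consists of the $n$ zeros of $\pi_n$, which by the classical theory of orthogonal polynomials are real, simple, and contained in the support of $\lambda$; hence $|\design| = n$ and the indicator (Lagrange) polynomials $l_z$, $z\in\design$, are well defined and form a basis of $\reals[x]_{\leq n-1} = \mathcal L(\design)$. Because $\pi_n$ vanishes on $\design$, we have $r(z) = p(z)$ for every $z\in\design$, so the representation \eqref{indicator-reminder} takes the form $r(x) = \sum_{z\in\design} p(z)\,l_z(x)$. Applying the expectation and using linearity once more yields $\expect\left(r(X)\right) = \sum_{z\in\design} p(z)\,\expect\left(l_z(X)\right) = \sum_{z\in\design} p(z)\,\lambda_z$ with $\lambda_z = \expect\left(l_z(X)\right)$. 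Adding the two contributions gives the claimed identity.

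The main obstacle, such as it is, lies not in the algebra but in making the decomposition legitimate: one must invoke the fact that $\design$ genuinely has $n$ distinct real points, so that the division produces a remainder $r$ of degree at most $n-1$ interpolating $p$ on all of $\design$ and so that the $l_z$ span $\mathcal L(\design)$. This is precisely where the orthogonal-polynomial hypothesis on $\pi_n$ is used in an essential way; every remaining step is linearity of the integral combined with orthogonality of the $\pi_k$.
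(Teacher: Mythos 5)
Your proposal is correct and follows essentially the same route as the paper's own proof: linearity splits $\expect\left(p(X)\right)$ into $\expect\left(q(X)\pi_n(X)\right)+\expect\left(r(X)\right)$, the Fourier expansion of $q$ together with orthogonality isolates $c_n(q)\normof{\pi_n}_\lambda^2$, and the remainder is handled via the Lagrange interpolation $r(x)=\sum_{z\in\design}p(z)l_z(x)$ on the $n$ distinct zeros of $\pi_n$. Your version is, if anything, slightly more careful than the paper's, which states the orthogonality relation as $\expect\left(\pi_k(X)\pi_n(X)\right)=\delta_{k,n}$ rather than $\delta_{k,n}\normof{\pi_n}_\lambda^2$ for the monic system.
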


\begin{remark}
 This theorem is a version of a well known result, see e.g. \cite[Sec. 1.4]{MR2061539}. We include the proof to underline a particular form of the error in the quadrature formula, to be used again in Theorem \ref{main_theo_onedim_general} and in Section 6.
Applying Theorem~\ref{main_theo_onedim} to $p=1$ we have that $1=\sum_{z \in \design} \lambda_z $. 
\end{remark}

\begin{proof} 
The set $\design$ contains $n$ distinct points. For a univariate polynomial $p$, we can write uniquely $p(x) = q(x) \pi_n(x)+r(x)$ with $\deg(r)<n$ and $\deg(q) = \operatorname{max}\{\deg(p)-n, 0\}$.  Furthermore, the indicator functions in the expression $r(x)=\sum_{z\in \design} p(z) l_{z}(x)$ are the Lagrange polynomials for $\design$: namely
\begin{equation*}
  l_{z}(x)=\prod_{w\in \design: w\neq z} \displaystyle\frac{x-w}{z-w}, \quad z \in \design. 
\end{equation*}
Hence we have
\begin{align*}
\expect\left( p(X) \right) &= 
\expect\left( q(X) \pi_n(X) \right) + \sum_{z\in \design} p(z) \expect\left( l_z(X) \right) 
\\ & = \sum_{k=0}^{+\infty} c_k(q) \expect\left( \pi_k(X)\pi_n(X)   \right) + \sum_{z\in \design} p(z)  \lambda_z 
\\  & =  c_n(q) \normof{\pi_n}_\lambda^2 + \sum_{z\in \design} p(z)  \lambda_z 
\end{align*}
as $\expect\left( \pi_k(X) \pi_n(X)\right) = \delta_{k,n}$ with $\delta_{k,n}=1$ if $k=n$ and $0$ otherwise.
\qed
\end{proof}

A particular case of Theorem~\ref{main_theo_onedim} occurs if $p$ has degree less than $2n$. In this case $q$ has degree at most $n-1$ and $c_n(q)=0$.  This shows that the quadrature rule with $n$ nodes given by the zeros of $\pi_n$ and weights $\{ \lambda_z \}_{z\in \design}$ is a Gaussian quadrature rule and it is exact for all polynomial functions of degree smaller or equal to $2n-1$. For notes on quadrature rules see for example \cite[Ch.~1]{MR2061539}. 

\begin{example}[Identification]
\begin{comment} Theorem~\ref{main_theo_onedim} can help to compute the coefficients in the Fourier expansion of some polynomial functions. \end{comment}
For $f$ polynomial of degree $N\leq 2n-1$ we can write  
$f(x) = \sum_{k=0}^N c_k(f) \pi_k(x)$. The constant term is given by 
\begin{equation*}  c_0(f) = \expect(f(X)) = \sum_{z\in \design} f(z) \lambda_z    \end{equation*}
and for all $i$ such that $N+i\leq 2n-1$
\begin{equation*} ||\pi_i||^2_\lambda c_i(f)
= \expect(f(X)\pi_i(X) ) =
\sum_{ z\in \design} f(z) \pi_i(z) \lambda_z   \end{equation*}
In particular, if $\deg f=n-1$ then all coefficients in the Fourier expansion of $f$ can be computed with an evaluation on $\design$. 

In general for  a polynomial of degree $N$ possibly larger than $2n-1$, Theorem~\ref{main_theo_onedim} gives  the Fourier expansion of its reminder by $\pi_n$, indeed 
\begin{multline*}
  \sum_{z\in \design} f(z) \pi_i(z) \lambda_z
 =
\sum_{z\in \design} \operatorname{NF} ( f  \pi_i  )(z) \lambda_z  
 \\ 
 = \expect(\operatorname{NF} (  f(X)\pi_i(X)  )) = ||\pi_i||^2_\lambda   c_i(\operatorname{NF}(f) )
\end{multline*}

\end{example} 

Theorem~\ref{main_theo_onedim_general} below generalises Theorem~\ref{main_theo_onedim} to a generic finite set of $n$ distinct points in $\reals$, say $\design$.  As above, the indicator function of $z\in \design$ is $ l_{z}(x)=\prod_{w\in \design: w\neq z} \displaystyle\frac{x-w}{z-w} $.  Let $g(x) = \prod_{z\in \design} (x-z)$ be the unique monic polynomial vanishing over $\design$ and of degree $n$.  Write a polynomial $p\in \reals[x]$ uniquely as $p(x)= q(x) g(x) + r(x)$ and consider the Fourier expansions of $q$ and $g$: $q(x) = \sum_{k=0}^{+\infty} c_k(q) \pi_k(x)$ and $g(x) = \sum_{k=0}^{n} c_k(g) \pi_k(x)$.

\begin{theorem} \label{main_theo_onedim_general}
With the above notation,
\begin{equation*}
  \expect_{\lambda}\left( p(X) \right) = \sum_{k=0}^{+\infty} c_k(q) c_k(g) \normof{\pi_k}_\lambda^2 + \sum_{z\in \design} p(z) \lambda_z.
\end{equation*}
\end{theorem}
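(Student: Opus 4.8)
The plan is to follow the same route as the proof of Theorem~\ref{main_theo_onedim}, replacing the node polynomial $\pi_n$ by the generic monic vanishing polynomial $g$ and keeping track of the extra Fourier coefficients this introduces. First I would apply linearity of the expectation to the division identity $p = qg + r$, writing $\expect\left( p(X) \right) = \expect\left( q(X) g(X) \right) + \expect\left( r(X) \right)$, and then treat the two summands separately.

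For the remainder term I would argue exactly as in the special case: since $g$ vanishes on every node, $r(z) = p(z)$ for each $z \in \design$, so the interpolation representation $r(x) = \sum_{z \in \design} p(z) l_z(x)$ holds with $l_z$ the Lagrange polynomials for $\design$. Applying $\expect$ and setting $\lambda_z = \expect\left( l_z(X) \right)$ yields $\expect\left( r(X) \right) = \sum_{z \in \design} p(z) \lambda_z$, which is precisely the second sum in the claim.

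The genuinely new ingredient is the product term $\expect\left( q(X) g(X) \right)$. Here I would substitute the two Fourier expansions, multiply them out as a double series, and push the expectation inside to obtain $\expect\left( q(X) g(X) \right) = \sum_{j} \sum_{k} c_j(q) c_k(g) \expect\left( \pi_j(X) \pi_k(X) \right)$. Orthogonality of the $\pi_k$ with respect to $\lambda$, namely $\expect\left( \pi_j(X) \pi_k(X) \right) = \delta_{j,k} \normof{\pi_k}_\lambda^2$, collapses the double sum to its diagonal and leaves $\sum_k c_k(q) c_k(g) \normof{\pi_k}_\lambda^2$. Adding the two contributions gives the stated formula.

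There is essentially no analytic obstacle: because $p$ and $g$ are polynomials, both $q$ and $g$ have finite Fourier expansions, so the double sum is finite and the interchange of $\expect$ with summation is automatic. In particular $c_k(g) = 0$ for $k > n$, and $c_n(g) = 1$ since both $g$ and $\pi_n$ are monic of degree $n$, so the nominally infinite sum over $k$ is really a finite sum up to $k = n$. The only point I would state carefully is that $c_k(q)$ and $c_k(g)$ are computed independently and that it is the \emph{matched} index $k$ that survives the orthogonality; taking $g = \pi_n$, so that $c_k(g) = \delta_{k,n}$, recovers Theorem~\ref{main_theo_onedim}.
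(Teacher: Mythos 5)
Your proof is correct and follows essentially the same route as the paper: split $p = qg + r$, handle the remainder via the Lagrange representation, and expand $q$ and $g$ in Fourier series so that orthogonality collapses the double sum to its diagonal. The additional remarks on finiteness and on recovering Theorem~\ref{main_theo_onedim} when $g=\pi_n$ are accurate but not needed.
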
  
\begin{proof} From 
%\begin{multline*}
\begin{align*}
p(x) &= q(x)g(x)+r(x) \\
&=  \sum_{k=0}^{+\infty} c_k(q)\pi_k(x) \sum_{j=0}^{n} c_j(g)\pi_j(x)  +\sum_{z\in \design} p(z) l_z
 \end{align*}
 %\end{multline*}
%
we have
\begin{align*}
\expect\left( p(X) \right) &=  \sum_{k=0}^{+\infty} \sum_{j=0}^{n} c_k(q) c_j(g)  \expect\left( \pi_k(X) \pi_j(X)\right) 
 +\sum_{z\in \design} p(z) \lambda_z \\ &=  \sum_{k=0}^{n} c_k(q) c_k(g) || \pi_k ||_\lambda^2 + \sum_{z\in \design} p(z) \lambda_z  
 \end{align*}
 and this proves the theorem.\qed
\end{proof}

The condition in Theorem~\ref{main_theo_onedim_general} is linear in the Fourier coefficients of $q$, which is found easily from $f$ by polynomial division. 
The first $| \design |$ Fourier coefficients of $q$ appearing in the conditions of the theorem are determined by solving the system of linear equations
\begin{align} \label{Fourier_coeff_one_dim}
M \left[ c_k(q)\right]_{k=0,\ldots,| \design | -1} =  \left[ q(z)\right]_{k=0,\ldots,| \design | -1}
\end{align}
where $M = \left[ \pi_k(z) \right]_{z\in \design, k=0,\ldots,|\design | -1}$ is  the design/evaluation matrix for the first $| \design | $ orthogonal polynomials.

Theorem~\ref{main_theo_onedim_general} can be used in two ways at least. If $p$ is known, the condition in the theorem can be checked to verify if the expected value of $p$ can be determined by Gaussian quadrature rule with nodes $\design$ and weights 
\begin{equation*}
 \lambda_z = \expectof{\prod_{w\in \design: w\neq z} \displaystyle\frac{X-w}{z-w}} = \frac{\sum_0^{n-1} \alpha(z,k)\expectof{X^k}}{\sum_0^{n-1} \alpha(z,k)z^k}
\end{equation*}
for ${z\in \design}$, where $\alpha(z,k)$ is the $k$-th symmetric function of the polynomial $\pi_n(x)/(x-z)$. 
The Fourier coefficients of $g$ can be computed analogously to those of $q$ adapting Equation~(\ref{Fourier_coeff_one_dim}).  

If $p$ is an unknown polynomial and $p(x)=\sum_\alpha p_\alpha x^\alpha$ for a finite number of non-zero, unknown, real coefficients $p_\alpha$, Theorem~\ref{main_theo_onedim_general} characterizes all the polynomials for which the Gaussian quadrature rule is exact, namely $\expect_{\lambda}\left( p(X) \right) = \sum_{z\in \design} p(z) \lambda_z$.  Furthermore, the characterization is a linear expression in the unknown $p_\alpha$.  
This is because in Equation~(\ref{Fourier_coeff_one_dim}) the $q(z)$ are linear combinations of the coefficients of $p$.

In Section~\ref{Hermite:section} we shall specialise our study to Hermite polynomials, while in Section~\ref{section:HigherDimension} we  shall generalise Theorem~\ref{main_theo_onedim_general} to higher dimension. 
To conclude this section, we discuss the remainder $r$ vs the orthogonal projection.
\begin{remark} \label{proiezione_th}
Let $p(x)\in \reals[x]$ and write $p(x)=q(x) \pi_n(x) + r(x)$ where $r$ has degree less than $n$. Then
\begin{enumerate}
\item $q$ is the unique polynomial such that $p-q\pi_n$ is orthogonal to all $\pi_m$ with $ m \ge n$. This is a rephrasing of the characteristic property of the remainder: $r$ belongs to $\reals[x]_{\leq n-1}$ if, and only if, $r$ is orthogonal to all $\pi_m$ with $ m \ge n$. Should it exist two such $q$'s, then $(q_1-q_2)\pi_n$ would be in the same space, hence null.
\item If $\deg(p) = n$, then $r$ is the orthogonal projection of $p$ on $\reals[x]_{\leq n-1}$. In fact, $q$ is the leading coefficient of $p$, therefore $p - r$ is a multiple of $\pi_n$ and indeed orthogonal to $\reals[x]_{\leq n-1}$.
\item If $\deg(p) = N \ge n$, then the orthogonal projection of $p$ on $\reals[x]_{\leq n-1}$ differs from $r$, unless the projection of $q\pi_n$ is zero.
\end{enumerate}
 \end{remark}

\begin{example} Substituting the Fourier expansions of $q$ and $p$ in the division above, for $m\geq n$ we find that the $m$-th coefficient in the Fourier expansion of $p$ can be written as \begin{align*}
 \expect\left( p(X) \pi_m(X) \right) & =  \expect\left( q(X)\pi_n(X) \pi_m(X) \right) \\
\sum_{k=0}^{+\infty} c_k(p) \expect\left( \pi_k(X)\pi_m(X) \right) & =  \sum_{j=0}^{+\infty} c_j(q)\expect\left( \pi_j(X)\pi_n(X) \pi_m(X) \right) \\
 c_m(p) || \pi_m ||^2 &= \sum_{j=0}^{+\infty} c_j(q)\expect\left( \pi_j(X)\pi_n(X) \pi_m(X) \right)
\end{align*}
For Hermite polynomials it can be simplified by e.g. using the product formula in Theorem~\ref{productHermite} of Section~\ref{Hermite:section}.  
\end{example}

%%%%%%%%%%%%%%%%%%%%%%%%%%%%%%%%%%%%%%%%%%%%%%%%%%%%%%%%%%%%%%%%

\section{Hermite polynomials}\label{Hermite:section}%3
There is another way to look at the algebra of orthogonal polynomials that we discuss here in the case of Hermite polynomials. 
The reference measure $\lambda$ is the normal distribution and $d\lambda(x) = w(x) \,dx$, with $w(x) = e^{-x^2/2}/\sqrt{2\pi} $, $x\in \reals$.

\subsection{Stein-Markov operators for standard normal distribution}
For a real valued, differentiable function $f$, define 
\begin{equation*}
  \delta f(x)= x f(x) - \displaystyle\frac{d}{dx} f(x) = -e^{x^2/2} \displaystyle\frac{d}{dx} \left( f(x) e^{-x^2/2} \right), 
\end{equation*}
$d^n=\displaystyle\frac{d^n}{dx^n}$, and consider $Z\sim \lambda$.
The following identity holds
\begin{equation} \label{malliavinderivative}
 \expect \left( \phi(Z) \ \delta^n \psi(Z) \right) =\expect \left( d^n  \phi(Z) \ \psi(Z) \right) 
\end{equation}
if $\phi,\psi$ are such that $\lim_{x\rightarrow \pm \infty}  \phi(x)\psi(x)e^{-x^2/2}=0$ and are square integrable, see \cite[Ch. V Lemma 1.3.2 and Proposition 2.2.3]{MR1335234}). Polynomials satisfy these conditions and $\delta$ is also called the Stein-Markov operator for the standard normal distribution. It is a shift-equivariant linear operator on $\reals[x]$, namely $\delta f(x+a)=(x+a)f(x+a) - f'(x+a) =(\delta f)(x+a)$ holds for $a\in \reals$ and the constant one is mapped into $x$.

The $n$-th Hermite polynomial can be defined as $H_n(x)=\delta^n 1$. 
Direct computation using $\delta$ proves the following well-known facts:
\begin{enumerate} % \setcounter{enumi}{4}
\item The first Hermite polynomials are
\begin{align*} 
&H_0=1 \\ &H_1(x)=x \\ &H_2(x)=x^2-1 \\ &H_3(x)=x^3-3x \\ &H_4(x) = x^4-6x^2+3 
\\ &H_5(x) = x^5-10x^3+ 15 x 
\end{align*}
\item $H_n(x) = (-1)^n e^{x^2/2} d^n ( e^{-x^2/2} ) $ (Rodrigues' formula) 
\item $d\delta -\delta d $ is the identity operator. From this the relationships $dH_n=n H_{n-1}$, 
$ d^m H_n  = \displaystyle \frac{n!}{m!} H_{n-m}$ for $m\leq n $
and the three-term recurrence relationship 
\begin{eqnarray} \label{recurrence_relationship}
H_{n+1}=xH_n-nH_{n-1}
\end{eqnarray}
 are deduced.
\item Hermite polynomials are orthogonal with respect to the standard normal distribution $\lambda$. Indeed from Equation~(\ref{malliavinderivative}) we have $\expect\left( H_n(Z) H_m(Z) \right)= n! \delta_{n,m}$ where $\delta_{n,m}=0$ if $n\neq m$ and $\delta_{n,m}=1$ if $n=m$.
\end{enumerate}

We already mentioned that $\{ H_n(x): n\leq d\}$ spans $\reals[x]_{\leq d}$ and that $H_n$ is orthogonal to any polynomial of degree different from $n$. The ring structure of the space generated by the Hermite polynomials is described in Theorem \ref{productHermite}.   

\begin{theorem} \label{productHermite}
The Fourier expansion of the  product $H_kH_n$ is 
\begin{equation*}  H_k H_n = H_{n+k}  + \sum_{i=1}^{n\wedge  k}  \binom{n}{i} \binom{k}{i} i! H_{n+k-2i}  \end{equation*}
 \end{theorem}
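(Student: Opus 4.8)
The plan is to prove the identity through the exponential generating function of the Hermite polynomials, which converts the Fourier expansion of a product into a single algebraic manipulation. First I would record the generating function $G(x,t) = \sum_{n\ge 0} H_n(x)\, t^n/n! = e^{xt - t^2/2}$. This is not stated above, but it follows quickly from the three-term recurrence \eqref{recurrence_relationship} together with $dH_m = mH_{m-1}$: since $\partial_t G = \sum_{m} H_{m+1}\, t^m/m!$, the recurrence gives $\partial_t G = (x-t)G$ with initial datum $G(x,0)=1$, whence the claimed closed form upon integrating in $t$.

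With $G$ in hand I would form the product of two copies in independent variables $s$ and $t$,
\[
\Bigl(\sum_{k} H_k \frac{s^k}{k!}\Bigr)\Bigl(\sum_{n} H_n \frac{t^n}{n!}\Bigr) = e^{x(s+t) - (s^2+t^2)/2}.
\]
The key algebraic step is the factorisation $e^{x(s+t) - (s^2+t^2)/2} = e^{x(s+t) - (s+t)^2/2}\, e^{st}$, which follows from $(s+t)^2 = s^2+t^2+2st$. The first factor is $G(x,s+t) = \sum_m H_m\,(s+t)^m/m!$ and the second is $\sum_i (st)^i/i!$. Expanding $(s+t)^m$ by the binomial theorem and matching the coefficient of $s^k t^n$ on both sides, the left-hand side yields $H_k H_n/(k!\,n!)$, while on the right the $s$- and $t$-degrees force $m = n+k-2i$, giving $\sum_i H_{n+k-2i}/\bigl(i!(k-i)!(n-i)!\bigr)$. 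Multiplying through by $k!\,n!$ and using $k!\,n!/\bigl(i!(k-i)!(n-i)!\bigr) = \binom{n}{i}\binom{k}{i} i!$ produces exactly the stated formula, the $i=0$ term contributing $H_{n+k}$.

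The main obstacle is purely bookkeeping: one must carry out the index substitution correctly and verify that the admissible range is $0 \le i \le n\wedge k$. Indeed, in $(s+t)^m = \sum_j \binom{m}{j} s^j t^{m-j}$ the constraints $j+i=k$ and $(m-j)+i=n$ force $m=n+k-2i$ and $j=k-i$, and the requirement $0\le j\le m$ translates precisely into $0\le i\le n\wedge k$; this is where care is needed. As a fully self-contained alternative using only \eqref{recurrence_relationship}, I would instead fix $n$ and induct on $k$, writing $H_{k+1}H_n = x H_k H_n - k H_{k-1} H_n$ and applying $xH_m = H_{m+1} + m H_{m-1}$ to the inductive expansion. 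Collecting the coefficient of $H_{n+k+1-2i}$ reduces the step to the recursion $a(k+1,n,i) = a(k,n,i) + (n+k-2i+2)\,a(k,n,i-1) - k\,a(k-1,n,i-1)$ for $a(k,n,i) = \binom{n}{i}\binom{k}{i} i!$; after clearing denominators this collapses to the trivial identity $(n-i+1)(k+1) = (n-i+1)\bigl[(k-i+1)+i\bigr]$. In that route the only delicate point is handling the boundary terms at $i=0$ and $i=n\wedge k$ and the base cases $k=0,1$.
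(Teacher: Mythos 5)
Your proof is correct, but it proceeds by a genuinely different route from the paper. You work with the exponential generating function $\sum_n H_n(x)t^n/n! = e^{xt-t^2/2}$, multiply two copies, exploit the factorisation $e^{-(s^2+t^2)/2} = e^{-(s+t)^2/2}e^{st}$, and extract the coefficient of $s^kt^n$; the index bookkeeping you describe ($j=k-i$, $m=n+k-2i$, hence $0\le i\le n\wedge k$) is exactly right, and the coefficient $k!\,n!/\bigl(i!(k-i)!(n-i)!\bigr)=\binom{n}{i}\binom{k}{i}i!$ matches the statement. The paper instead argues ``weakly'': it pairs $H_kH_n=\delta^n 1\cdot H_k$ against a test function $\psi$ under the Gaussian measure, moves $\delta^n$ to the other side of the inner product via the adjoint identity $\expect(\phi\,\delta^n\psi)=\expect(d^n\phi\,\psi)$, and expands $d^n(H_k\psi)$ by the Leibniz rule together with $d^iH_k=\frac{k!}{(k-i)!}H_{k-i}$, identifying the Fourier coefficients from the resulting pairings. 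The paper's route is deliberately operator-theoretic, fitting its Stein--Markov theme and reusing machinery (Equation~\eqref{malliavinderivative}) already set up for the rest of Section~\ref{Hermite:section}; your route is purely formal-algebraic, establishes the identity directly as an equality of polynomials without reference to the measure, and is arguably shorter once the generating function is granted. Your fallback induction on $k$ via the three-term recurrence is also sound (the recursion $a(k+1,n,i)=a(k,n,i)+(n+k-2i+2)a(k,n,i-1)-k\,a(k-1,n,i-1)$ is the correct one and is satisfied by $\binom{n}{i}\binom{k}{i}i!$), and has the merit of using only material explicitly stated in the paper.
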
 
 \begin{proof}
Note that  $\langle \phi,\psi\rangle = \expect(\phi(Z) \psi(Z))$ is a scalar product on the obvious space and let $n\leq k$ with $Z\sim \mathcal N(0,1)$ and $\psi,\phi$ square integrable functions for which identity~(\ref{malliavinderivative}) holds. Then
\begin{align*}
 \langle {H_k H_n}, \psi \rangle 
& =  \langle \delta^n 1, H_k \psi \rangle \\ &=  \langle 1,  d^n( H_k \psi ) \rangle = \sum_{i=0}^n \langle 1, \binom{n}{i} d^i H_k \ d^{n-i} \psi \rangle \\
& = \langle 1, H_k d^n \psi \rangle +  \sum_{i=1}^n \langle 1, \binom{n}{i} d^i H_k \ d^{n-i} \psi \rangle \\
= \langle H_{n+k} , \psi  \rangle &+ \sum_{i=1}^n \binom{n}{i} k (k-1)\dots(k-i+1) \langle H_{n+k-2i} , \psi \rangle \\
& = \langle H_{n+k} , \psi  \rangle   + \langle  \sum_{i=1}^n \binom{n}{i} \binom{k}{i} i! H_{n+k-2i}  , \psi \rangle
 \end{align*} 
 \qed
\end{proof}
 
\begin{example}[Aliasing]  \label{confounding_Example_Hermite_one_dim}
As an application of Theorem~\ref{productHermite}, observe that the three-term recurrence relation for Hermite polynomials, Equation~(\ref{recurrence_relationship}) 
\begin{eqnarray*} 
H_{n+1}=xH_n-nH_{n-1}
\end{eqnarray*}
evaluated on the zeros of $H_n(x)$, say $\design_n$,  becomes
$H_{n+1}(x) \equiv -n H_{n-1}(x)$ where $\equiv$ indicates that equality holds for $x\in \design_n$. 
In general let $H_{n+k}  \equiv \sum_{j=0}^{n-1} h_j^{n+k} H_j $ be the Fourier expansion of the normal form of $H_{n+k}$ at $\design_n$, 
where we simplified the notation for the Fourier coefficients. 
Substitution in the product formula in Theorem~\ref{productHermite} gives the formula to write $h_j^{n+k}$ in terms of Fourier coefficients of lower order Hermite polynomials: 
\begin{eqnarray} 
\operatorname{NF}(H_{n+k}) 
& \equiv  -\sum_{i=1}^{n \wedge k}  {n \choose i} {k \choose i}  i! \operatorname{NF}( H_{n+k-2i} ) \label{peccati} \\ 
& \equiv - \sum_{i=1}^{n \wedge k}  {n \choose i} {k \choose i}  i! \sum_{j=0}^{n-1} h_j^{n+k-2i} H_j \nonumber
\end{eqnarray}
Equating coefficients gives a closed formula
\begin{equation*}
 h_j^{n+k} = -\sum_{i=1}^{n \wedge k}  {n \choose i} {k \choose i}  i! h_j^{n+k-2i} 
\end{equation*}

In Table~\ref{confouding_Hermite} the normal form of $H_{k+n}$ with respect to $H_n$ is written in terms of Hermite polynomials of degree smaller than $n$. For example, $H_{n+3}(x) = - n (n-1)(n-2) H_{n-3}(x) +3 n H_{n-1}(x)$ for those values of $x$ such that $H_n(x)=0$.
\begin{table}[h] 
\begin{equation*}
\begin{array}{l|l}
k & H_{n+k} \equiv \\ \hline
  1 & -n H_{n-1} \\ 
  2 & -n (n-1) H_{n-2} \\
  3 & - n (n-1)(n-2) H_{n-3} +3 n H_{n-1} \\ 
  4 & - n (n-1)(n-2)(n-3) H_{n-4}+8n(n-1)H_{n-2} \\ 
  5 & - \frac{n!}{(n-5)!} H_{n-5}+ 5n H_{n-1}+15 n(n-1)(n-2)H_{n-3} \\ 
  6 & - \frac{n!}{(n-6)!} H_{n-6}+24n(n-1)(n-2)(n-3)H_{n-4} \\ & \multicolumn{1}{r}{+10n(n-1)(2n-5)H_{n-2}}
\end{array} 
\end{equation*}
\caption{Aliasing of $H_{n+k}$, $k=1,\dots,6$ over $\design = \{H_n(x)=0\}$}\label{confouding_Hermite}
\end{table}
\end{example} 

\begin{example}
Observe that if $f$ has degree $n+1$ equivalently $k=1$ then  
\begin{multline*}
f = \sum_{i=0}^{n-1} c_i(f) H_i +\sout{c_n(f) H_n} + c_{n+1}(f)H_{n+1}
 \\ \equiv \sum_{i=0}^{n-2} c_i(f) H_i + \left(c_{n-1} (f)-n c_{n+1}(f) \right) H_{n-1} 
\end{multline*}
and all coefficients up to degree $n-2$ are ``clean''.

 \end{example} 

We give another proof of Theorem~\ref{main_theo_onedim} for Hermite polynomials.
\begin{corollary}
Let $\design_n=\{x: H_n(x)=0 \}$ and $p\in \reals[x]$. 
Let $p(x)=q(x) H_n(x)+r(x)$ with the degree of $r$ smaller than $n$ 
and let $Z\sim\mathcal N(0,1)$. Then
\begin{equation*}
\expect\left( p(Z)\right) = \sum_{z\in \design_n} p(z) \lambda_z \text{ if and only if } \expect\left( d^n q (Z)\right)=0
\end{equation*}
with $\lambda_z=\expect\left( l_z(Z) \right)$ and $l_{z}(x)=\prod_{w\in \design: w\neq z} \displaystyle\frac{x-w}{z-w} $, $z\in \design_n$.
\end{corollary}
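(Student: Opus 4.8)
The plan is to split $\expect(p(Z))$ by means of the division $p = q H_n + r$ and then to turn the resulting error term $\expect(q(Z)H_n(Z))$ into $\expect(d^n q(Z))$ using the adjunction between $\delta$ and $d$ recorded in~\eqref{malliavinderivative}. The whole argument is an ``if and only if'' that follows from a single identity, so no case analysis is needed.

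First I would apply linearity together with the interpolation formula~\eqref{indicator-reminder}, which over $\design_n$ reads $r(x) = \sum_{z\in\design_n} p(z) l_z(x)$ with the $l_z$ the stated Lagrange polynomials. Taking expectations gives
\begin{equation*}
\expect(p(Z)) = \expect(q(Z) H_n(Z)) + \sum_{z\in\design_n} p(z)\lambda_z,
\end{equation*}
where $\lambda_z = \expect(l_z(Z))$. Hence the cubature identity $\expect(p(Z)) = \sum_{z\in\design_n} p(z)\lambda_z$ holds if and only if $\expect(q(Z) H_n(Z)) = 0$, and it remains only to identify this last expectation with $\expect(d^n q(Z))$.

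For the identification I would invoke the definition $H_n = \delta^n 1$ and apply~\eqref{malliavinderivative} with $\phi = q$ and $\psi = 1$; both are polynomials, so the square-integrability and vanishing-at-infinity hypotheses stated after~\eqref{malliavinderivative} are automatic. This yields
\begin{equation*}
\expect(q(Z) H_n(Z)) = \expect(q(Z)\,\delta^n 1(Z)) = \expect(d^n q(Z)\cdot 1) = \expect(d^n q(Z)),
\end{equation*}
and combining this with the previous display proves both implications simultaneously.

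The argument involves essentially no computation, and the only point requiring care is that the boundary terms implicit in~\eqref{malliavinderivative} vanish; this is guaranteed because $q$ and the constant $1$ are polynomials, exactly as noted in the text. As a consistency check, expanding $q$ in its Hermite--Fourier series and using $d^n H_k = \tfrac{k!}{(k-n)!}H_{k-n}$ recovers $\expect(d^n q(Z)) = n!\, c_n(q) = c_n(q)\normof{H_n}_\lambda^2$, so the condition $\expect(d^n q(Z)) = 0$ coincides with the vanishing of the error term $c_n(q)\normof{\pi_n}_\lambda^2$ of Theorem~\ref{main_theo_onedim}, as it should.
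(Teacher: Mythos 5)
Your proposal is correct and follows essentially the same route as the paper: decompose $\expect(p(Z))$ via the division $p = qH_n + r$ with $r = \sum_{z} p(z)l_z$ as in Theorem~\ref{main_theo_onedim}, then identify the error term by writing $H_n = \delta^n 1$ and applying the adjunction~\eqref{malliavinderivative} to get $\expect(q(Z)H_n(Z)) = \expect(d^n q(Z))$. The closing consistency check with $\expect(d^n q(Z)) = n!\,c_n(q)$ is a welcome addition but not needed.
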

\begin{proof}
From Equation~(\ref{malliavinderivative}) we have
\begin{equation*}
\expect\left( q(Z) H_n(Z) \right) = \expect\left( q(Z) \delta^n 1 \right) = \expect\left( d^n q(Z) \right)
\end{equation*}
Now by the same steps followed in the proof of Theorem~\ref{main_theo_onedim} we conclude that 
\begin{equation*}
\expect\left( p(Z) \right) = \expect\left( d^n q(Z) \right) +\sum_{z\in \design_n} p(z) \lambda_z
\end{equation*} 
\qed   \end{proof}

%%%%%%%%%%%%%%%%%%%%%%%%%%%%%%%%%%%%%%%%%%%%%%%%%%%%%%%%%%%%%%%%

\begin{comment}
Applications of Theorem~\ref{productHermite} shows that
\begin{align*} 
H_2 H_1 = (x^2-1) x = H_3+2H_1 & & H_k^2  =H_{2k}+\sum_{i=1}^k \binom{k}{i}^2 i! H_{2k-2i}  \\
\expect( H_k^2(Z) )  =  \binom{k}{1}^2   k (k-1)\dots 1 = k! & &  \expect( H_k (Z) H_n(Z) ) = 0 \quad \text{if } n\neq k
\end{align*} 
where the expected values  are the coefficients of $H_0$ in the product polynomial. 
     \end{comment}
     
\subsection{Algebraic characterisation of the weights}

Theorem~\ref{weighing_theorem} gives two polynomial equations whose zeros are the design points and the weights. 
This is a particular case of Formula~(1.17) in~\cite{StroudSecrets}.
We provide the proof to highlight the algebraic nature of the result and of its proof.
% is based on the {Christoffel-Darboux formul{\ae}}, Equations~(\ref{CD-formula}).
 
 \begin{theorem} \label{weighing_theorem}
 Let $ \design=\{x: H_n(x)=0\}  $.  
\begin{enumerate}
\item There exists only one polynomial $\lambda$ of degree $n-1$ such that $\lambda(x) = \lambda_x$ for all $x \in \design$, 
 \item furthermore $ \lambda_x= \frac{(n-1)!}{n} H_{n-1}^{-2}(x)  $. Equivalently
\item  the polynomial $\lambda$ satisfies 
\begin{equation*}   
\left\{  \begin{array}{ll} 
    H_n(x)=0      \\ 
     \lambda(x) H_{n-1}^2(x)= \displaystyle\frac{(n-1) !}n
   \end{array}   \right.  \end{equation*}
   \end{enumerate} 
\end{theorem}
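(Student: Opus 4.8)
The plan is to handle the three assertions in the order (1), (2), (3), since (1) is pure interpolation, (2) is the analytic heart, and (3) only recombines them. For part (1) I would use the representation already available from Equation~\eqref{indicator-reminder}: set $\lambda(x)=\sum_{z\in\design}\lambda_z\,l_z(x)$, a linear combination of the $n$ Lagrange polynomials $l_z$, each of degree $n-1$. Then $\deg\lambda\le n-1$ and, since $l_z(w)=\delta_{zw}$, we get $\lambda(w)=\sum_z\lambda_z l_z(w)=\lambda_w$ for every $w\in\design$. Uniqueness is immediate because two polynomials in $\reals[x]_{\le n-1}$ (a space of dimension $n=|\design|$) that coincide on the $n$ points of $\design$ have a difference with $n$ roots and degree at most $n-1$, hence vanish.

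For part (2) I would first exploit that $H_n$ is monic with $n$ simple real zeros, so $H_n(x)=\prod_{w\in\design}(x-w)$ and the Lagrange polynomial takes the compact form $l_z(x)=H_n(x)/\big((x-z)\,H_n'(z)\big)$, using $H_n'(z)=\prod_{w\neq z}(z-w)$. The crux is the Christoffel-number identity $\lambda_z=\big(\sum_{k=0}^{n-1}\widetilde H_k(z)^2\big)^{-1}$, where $\widetilde H_k=H_k/\normof{H_k}$. I would derive it via the reproducing kernel $K_{n-1}(x,z)=\sum_{k=0}^{n-1}\widetilde H_k(x)\widetilde H_k(z)$: the first (non-confluent) Christoffel-Darboux formula in \eqref{CD-formula}, evaluated using $\widetilde H_n(z)=0$, shows that $K_{n-1}(\cdot,z)/K_{n-1}(z,z)$ is a polynomial of degree $\le n-1$ equal to $\delta_{wz}$ at every $w\in\design$, hence equal to $l_z$. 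Taking expectations and using $\expectof{\widetilde H_k(Z)}=\langle\widetilde H_k,\widetilde H_0\rangle=\delta_{k0}$ (orthonormality against $\widetilde H_0=1$ for the probability measure) then gives $\lambda_z=\expectof{l_z(Z)}=K_{n-1}(z,z)^{-1}$.

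It remains to evaluate $K_{n-1}(z,z)$ at a zero of $H_n$ through the second (confluent) line of \eqref{CD-formula} together with the Hermite data $\normof{H_k}^2=k!$, the derivative rule $H_n'=nH_{n-1}$ (from \eqref{recurrence_relationship} and $dH_n=nH_{n-1}$), and $\beta_n=n$. Substituting $\widetilde H_n'(z)=nH_{n-1}(z)/\sqrt{n!}$, $\widetilde H_{n-1}(z)=H_{n-1}(z)/\sqrt{(n-1)!}$ and simplifying the factorials yields $K_{n-1}(z,z)=\tfrac{n}{(n-1)!}\,H_{n-1}(z)^2$, whence $\lambda_z=\tfrac{(n-1)!}{n}\,H_{n-1}(z)^{-2}$, which is part (2). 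Part (3) is then a reformulation: by part (1) the interpolant $\lambda$ agrees with the weights on $\design$, and by part (2) it satisfies $\lambda(x)H_{n-1}(x)^2=(n-1)!/n$ exactly on $\{H_n=0\}$, giving the stated system.

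The main obstacle I expect is part (2), and inside it the bookkeeping between the monic Hermite polynomials $H_k$ and their orthonormal counterparts $\widetilde H_k=H_k/\sqrt{k!}$: since \eqref{CD-formula} is stated for the orthonormal system, the factors $\sqrt{\beta_n}=\sqrt n$ and the factorial combination $\sqrt{n!}\sqrt{(n-1)!}=\sqrt n\,(n-1)!$ must be tracked precisely to land on the clean constant $(n-1)!/n$. A secondary point that needs justification is $H_{n-1}(z)\neq 0$ for $z\in\design$, so that the reciprocal $H_{n-1}^{-2}(x)$ is well defined on $\design$; this holds because consecutive orthogonal polynomials share no common zero, a standard consequence of the three-term recurrence \eqref{recurrence_relationship}.
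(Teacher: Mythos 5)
Your proposal is correct and follows essentially the same route as the paper: both proofs reduce part (2) to the two Christoffel--Darboux identities evaluated at the zeros of $H_n$ (using $\alpha_n=0$, $\beta_n=n$, $\widetilde H_n'=\sqrt n\,\widetilde H_{n-1}$) and then extract $\lambda_z$ as the $\widetilde H_0$-Fourier coefficient of $l_z$ by taking expectations. The only difference is presentational --- you identify $l_z$ directly as the normalized reproducing kernel $K_{n-1}(\cdot,z)/K_{n-1}(z,z)$, while the paper reaches the same coefficient formula $c_{kj}=\frac1n\widetilde H_j(x_k)\widetilde H_{n-1}^{-2}(x_k)$ by inverting the evaluation matrix $\mathbb H_n$ via $\mathbb H_n\mathbb H_n^t=n\,\diag(\widetilde H_{n-1}^2(x_i))$ --- and your added remark that $H_{n-1}$ does not vanish on $\design$ is a worthwhile point the paper leaves implicit.
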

\begin{proof} 
\begin{enumerate}
\item The univariate polynomial $\lambda$ is the interpolation polynomial of the values $\lambda_z$'s at the $n$ distinct points in $\design_n$ and hence it exists, unique of degree $n-1$. 
\item Observe that for Hermite polynomials  $\alpha_n=0$, $\beta_n=n$, $ \widetilde H_n(x) = {H_n(x)}/ {\sqrt{n!}} $ and $\widetilde H_n^\prime(x) = \sqrt n \widetilde H_{n-1}(x) $. Substitution in the Christoffel-Darboux formul{\ae} and evaluation at $\design_n=\{x_1,\ldots,x_n\}$ give
\begin{equation} \label{CD-formula-on-D}
\begin{split} \sum_{k=0}^{n-1} \widetilde H_k(x_i) \widetilde H_k(x_j) =0  \text{ if } i\neq j &\\
%\qquad 
\sum_{k=0}^{n-1} \widetilde H_k(x_i)^2  = n \widetilde H_{n-1}(x_i)^2  
\end{split}
\end{equation} 
In matrix form Equations~(\ref{CD-formula-on-D}) becomes
\begin{align*} 
\mathbb H_n \mathbb H_n^t & = n \, \diag( \widetilde H_{n-1}(x_i)^2: i=1,\ldots,n) 
\end{align*} 
where $\mathbb H_n$ is the square matrix  $\mathbb H_n=\left[ \widetilde H_j(x_i) \right]_{i=1,\ldots,n; j=0,\ldots,n-1}$ and $\diag$ indicates a diagonal matrix.
Observe that $\mathbb H_n$ is invertible and
\begin{align*} 
 \mathbb H_n^{-1}          &=   \mathbb H_n^t n^{-1} \diag( \widetilde H_{n-1}^{-2}(x_i): i=1,\ldots,n)
\end{align*} 

Now, let $f$ be a polynomial of degree at most $n-1$, that is a typical remainder by division for $H_n$, then $f(x) = \sum_{j=0}^{n-1} c_j \widetilde H_j(x)$.
Write $\underline f=\mathbb H_n \underline c$ where  $\underline f=[f(x_i)]_{i=1,\ldots,n}$ and $\underline c=[c_j]_j$.
Furthermore note that 
\begin{align}
\underline c
&=\mathbb H_n^{-1} \underline f  = \mathbb H_n^t n^{-1} \diag( \widetilde H_{n-1}^{-2}(x_i): i=1,\ldots,n) \underline f \nonumber \\ &= \mathbb H_n^t n^{-1} \diag( \widetilde H_{n-1}^{-2}(x_i) f(x_i): i=1,\ldots,n)  \nonumber \\
c_j &= \frac1n \sum_{i=1}^n \widetilde H_j(x_i) f(x_i) \widetilde H_{n-1}^{-2}(x_i)
\label{chaosexpcoeffs} 
\end{align}
Apply this to  the $k$-th Lagrange polynomial,   $f(x)=l_k(x)$, whose Fourier expansion is $f(x)= \sum_{j=0}^{n-1} c_{kj} \widetilde H_j(x) $. 
Using $l_k(x_i)=\delta_{ik}$ in Equation~(\ref{chaosexpcoeffs}), obtain 
\begin{align} \label{coefflagrange}
c_{kj} = \frac1n \widetilde H_j(x_k) \widetilde H_{n-1}^{-2}(x_k) 
\end{align}
The expected value of $l_k(Z)$ is 
\begin{equation*}
\lambda_k=\expect\left( l_k(Z) \right) = \sum_{j=0}^{n-1} c_{kj} \expect\left( \widetilde H_j(x) \right)
=c_{k0} 
\end{equation*}
Substitution in Equation~(\ref{coefflagrange}) for $j=0$ gives
\begin{equation*}
  \lambda_k =  \frac1n  \widetilde H_{n-1}^{-2}(x_k) = 
\frac{(n-1)!}n   H_{n-1}^{-2}(x_k)  
\end{equation*}
This holds for all $k=1,\ldots,n$. 
\item The system of equations is a rewriting of the previous parts of the theorem because the first equation $H_n(x)=0$ states that only values of $x\in\design_n$ are to be considered  and the second equation is what we have just proven. 
\end{enumerate}
\qed\end{proof}

Item~2 in Theorem \ref{weighing_theorem} states that the weights are strictly positive.   
Theorem~\ref{main_theo_onedim} applied to the constant polynomial $p(x)=1$ shows that they sum to one. In other words, the mapping that associates
$z$ to $\lambda_z$,  $z\in \design_n$, is a discrete probability density. 
Theorem~\ref{main_theo_onedim} states that the expected value of the polynomial functions of $Z~\sim \mathcal N(0,1)$ 
for which $c_n(q)=0$, is equal to the expected value of a discrete 
random variables $X$ given by $ \operatorname{P_n} \left( X=x_k \right) =  \expect \left( l_k(Z) \right)=  \lambda_k$, $k=1,\ldots,n$ 
\begin{equation*} 
\expect \left( p(Z) \right)  = \sum_{k=1}^n p(x_k)  \lambda_k = \operatorname{\mathbb E_n} \left( p(X) \right) 
\end{equation*}

\begin{example}
For $n=3$ the polynomial $\lambda$ in Theorem~\ref{weighing_theorem} can be determined by-hand. For larger values of $n$ an algorithm is provided in Section~\ref{weighing-code}.
The polynomial system to be considered is 
\begin{align*}
0 &=H_3(x)=x^3-3x \\
2/3 &= \lambda(x) H^2_2=(\theta_0+\theta_1x+\theta_2 x^2) (x^2-1)^2
\end{align*}
where $\lambda(x) = \theta_0+\theta_1x+\theta_2 x^2$. 
The degree of $ \lambda(x) H_2^2$ is reduced to $2$ by using $x^3=3x$ 
\begin{equation} \label{weights-example-3}
 2/3 = \lambda(x) H^2_2 = \theta_0+\theta_1 4 x + (\theta_0+4 \theta_2) x^2
\end{equation}
Coefficients in Equation~(\ref{weights-example-3}) are equated to give $\lambda(x)=\frac23-\frac{x^2}6$.
\end{example} 

In some situations, e.g. the design of an experimental plan or of a Gaussian quadrature rule, the exact computation of the weights might not be necessary and $\lambda(x)$ is all we need. When the explicit values of the weights are required, the computation has to be done outside a symbolic computation setting as 
 we need to solve, e.g., $H_3(x)=0$ to get $\design_3=\{ -\sqrt{3},  0,\sqrt{3} \}$
and evaluate $\lambda(x)$  to find $\lambda_{-\sqrt 3}= \lambda(-\sqrt 3) =\frac16=\lambda_{\sqrt 3}$ and 
$\lambda_0= \lambda(0)=\frac23$. 

\begin{example}  \rm
Let a positive integer $N$ be given and let $k$ and $n$ be positive integers such that $kn<2N$ then 
\[
\expect \left(H_n(Z)^k \right) = \sum_{z\in \mathcal D_N } \displaystyle\frac{(n-1)!}{n}  \displaystyle\frac{H_n(z)^k }{H^2_{N-1}(z)}
\]
by Theorem~\ref{weighing_theorem}. The issue is then the evaluation of $H_{N-1}$ and $H_n$ at the zeros of $H_N$, for which the recurrence relationship can be used when the values are not tabulated.
\end{example} 

\subsection{Code for the weighing polynomial}\label{weighing-code}

The polynomial $\lambda(x)$ in Theorem~\ref{weighing_theorem} is called the \emph{weighing polynomial}.  Table~\ref{weighing-code-table} gives a code written in the specialised software for symbolic computation called CoCoA~\cite{CocoaSystem}  to compute the Fourier expansion of $\lambda(x)$ based on  Theorem~\ref{weighing_theorem}.

Line 1 specifies the number of nodes $N$.  Line 2 establishes that the working environment is a polynomial ring whose variables are the first $(N-1)$-Hermite polynomials plus an extra variable $w$ which encodes the weighing polynomial; here it is convenient to work with a elimination term-ordering of $w$, called {\tt Elim(w)}, so that the variable $w$ will appear as least as possible.  Lines 3, 4, 5 construct Hermite polynomials up-to-order $N$ by using the recurrence relationships~(\ref{recurrence_relationship}). Specifically they provide the expansion of $H_j$ over $H_k$ with $k<j$ for $k=0,\ldots,N-1$.  Line~6 states that $H_N= H_1 H_{N-1}-(N-1) H_{N-2}=0$, `giving' the nodes of the quadrature.  Line~7 is the polynomial in the second equation in the system in Item~3 of Theorem~\ref{weighing_theorem} and `gives' the weights.  
There are $N$ equations which are collected in an ideal whose Gr\"obner basis is computed in Line~8.  In our application it is interesting that the Gr\"obner bases contains a polynomial in which $w$ appears alone as a term of degree one. 
This element of the Gr\"obner basis relates explicitly to the desired weighing polynomial $w$ to the first $(n-1)$-Hermite polynomials.
%Explicitly $w$ in such polynomial gives the weighing polynomial written in terms of the first $N-1$ Hermite polynomials.  
% The ideal $J$ is the set of all polynomials in the ring whose zeros include the zeros of the system of equations $Eqs$. Its Gr\"obner bases is a special re-writing of $Eqs$
\begin{center} 
\begin{table}[h] 
\begin{verbatim} 
L1  N:=4;                                  
L2  Use R::=Q[w,h[1..(N-1)]], Elim(w);     
L3  Eqs:=[h[2]-h[1]*h[1]+1];              
L4  For I:=3 To N-1 Do 
L5   Append(Eqs,h[I]-h[1]*h[I-1]+(I-1)*h[I-2]) EndFor;    
L6  Append(Eqs,h[1]*h[N-1]-(N-1)*h[N-2]);
L7  Append(Eqs,N*w*h[N-1]^2-Fact(N-1));    
L8  J:=Ideal(Eqs); GB_J:=GBasis(J); Last(GB_J); 
L9  3w + 1/4h[2] - 5/4                   
\end{verbatim}  
  \caption{Computation of the Fourier expansion of the weighing polynomial using Theorem~\ref{weighing_theorem}}
  \label{weighing-code-table}
 \end{table}
\end{center}  
Line~9 in Table~\ref{weighing-code-table} gives the polynomial obtained for $N=4$, as set in Line~1, namely
\begin{equation*}
  \lambda(x)=\left( - \frac14 H_2(x)+\frac54 \right) \frac13 =\frac{6-x^2}{12}
\end{equation*}
The nodes are  $\pm\sqrt{3\pm\sqrt{6}} $ and  the values of the weights are $\frac{3\pm \sqrt{6}}{12}$, showing that both nodes and weights are algebraic numbers but not rational numbers.
On a Mac OS X with an Intel Core 2 Duo processor (at 2.4  GHz) using CoCoA (release 4.7)
the result is obtained
for $N=10$ in Cpu time = 0.08, User time = 0;
for $N=20$ in Cpu time = 38.40, User time = 38;
for  $N=25$ in Cpu time = 141.28, User time = 142
and for $N=30$ in Cpu time = 5132.71, User time = 5186.
% and gives a weighing polynomial of 22.349 characters. 
Observe that this computations can be done once for all and the results stored.
Observe furthermore that in Line~8
the \cocoa\ commands \texttt{GB{\_}J:=GBasis(J); Last(GB\_J);} 
could be substituted by  \texttt{NF(w,J)} . This
does not improve on computational cost as NF requires the computation of a Gr\"obner basis and a reduction.
As a minor point we observe that the symbol w would not appear in Line~9.

%%%%%%%%%%%%%%%%%%%%%%%%%%%%%%%%%%%%%%%%%%%%%%%%%%%%%%%%%%%%%%%%

\section{Fractional design} \label{fraction_section}%4

In this section we return to the case of general orthogonal polynomials, $\{ \pi_n\}_n$, and positive measure, $d\lambda$. 
We assume that the nodes are a proper subset $\mathcal F$ of $\design_n=\{ x\in \mathbb R: \pi_n(x)=0 \}$ with  $m$ points, $0 < m <n$.
We work within two different settings, in one the ambient design $\design_n$ is considered while in the other one it is not.  

Consider the indicator function of $\mathcal F$ as subset of $\design_n$, namely 
$1_\mathcal F(x)=1$ if $x\in \mathcal F$ and $0$ if $x\in \design_n\setminus \mathcal F$. 
It can be represented by a polynomial of degree $n$ because it is a function defined over $\design_n$
\cites{MR2290010,MR2332740}. 
Let $p$ be a polynomial of degree at most $n-1$ so that the product $p(x)1_\mathcal F(x)$ is a polynomial of degree at most $2n-1$.
Then from Theorem~\ref{main_theo_onedim} we have 
\begin{multline*} 
  \expect((p 1_{\mathcal F})(X)) 
  = \sum_{z\in \mathcal F} p(z) \lambda_z 
  = \operatorname{{\mathbb E}_n} \left( p(Y) 1_{\mathcal F}(Y) \right)  
 \\ =   \operatorname{{\mathbb E}_n} \left( p(Y) | Y\in \mathcal F \right) 
\operatorname{P_n}(Y\in \mathcal F)   
 \end{multline*}
where $X$ is a random variable with probability law $\lambda$ and $Y$ is a discrete random variable taking value $z\in\mathcal F$ with probability  $\operatorname{P_n}(Y=z)= \lambda_z$.
The first equality follows from the fact that $p(x)1_\mathcal F(x)$ is zero for  $x\in \design\setminus \mathcal F$ and the last equality from the definition of conditional expectation. 

Another approach is to consider the polynomial whose zeros are the elements of $\mathcal F$, say $\omega_\mathcal F(x) = \underset{z\in \mathcal F}{\prod} (x-z)$. 
Now consider the Lagrange polynomials for $\mathcal F$, namely 
 $l_z^\mathcal F(x) =  \underset{\underset{w\in \mathcal F}{w\neq z}}{\prod} \displaystyle\frac{x-w}{z-w}$ for $z\in \mathcal F$.
 
 \begin{lemma} 
Let $\mathcal F\subset \design_n$.  The Lagrange polynomial for $z\in \mathcal F$ is the remainder
of the Lagrange polynomial for $z\in \design_n$ with respect to $\omega_\mathcal F(x)$, namely 
  \begin{equation*}
  l_z^\mathcal F(x)= \operatorname{NF} \left(l_z(x), \langle \omega_\mathcal F(x) \rangle \right)\end{equation*} 
 \end{lemma}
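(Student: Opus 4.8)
The plan is to exploit the uniqueness of low-degree interpolation. First I would record the degrees of the two polynomials in play. Dividing $l_z$ by the monic degree-$m$ polynomial $\omega_{\mathcal F}$ yields a remainder $r := \operatorname{NF}\left(l_z, \langle \omega_{\mathcal F} \rangle\right)$ of degree at most $m-1$, while $l_z^{\mathcal F}$, being the Lagrange polynomial on the $m$-point set $\mathcal F$, also has degree at most $m-1$. Since a univariate polynomial of degree at most $m-1$ is determined by its values at any $m$ distinct points, it suffices to show that $r$ and $l_z^{\mathcal F}$ agree on the $m$ points of $\mathcal F$.

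Next I would evaluate the division identity on $\mathcal F$. Writing $l_z(x) = q(x)\omega_{\mathcal F}(x) + r(x)$ and using $\omega_{\mathcal F}(w) = 0$ for every $w \in \mathcal F$, one obtains $r(w) = l_z(w)$ for all $w \in \mathcal F$. The key observation is then that $l_z$ is the indicator (Lagrange) polynomial of $z$ on the larger set $\design_n$, so $l_z(w) = \delta_{z,w}$ for all $w \in \design_n$; because $\mathcal F \subset \design_n$, this restriction gives $l_z(w) = \delta_{z,w}$ for $w \in \mathcal F$. Hence $r(w) = \delta_{z,w}$ on $\mathcal F$, which is exactly the defining interpolation property of $l_z^{\mathcal F}$.

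Combining these facts, $r$ and $l_z^{\mathcal F}$ are two polynomials of degree at most $m-1$ taking identical values at the $m$ distinct points of $\mathcal F$, so they coincide, proving $l_z^{\mathcal F} = \operatorname{NF}\left(l_z, \langle \omega_{\mathcal F} \rangle\right)$. There is no genuinely hard step here; the only point requiring care is the degree bookkeeping, namely confirming that the remainder really has degree at most $m-1$ (so that the $m$-point interpolation argument pins it down uniquely) rather than merely degree smaller than $\deg l_z = n-1$. This is immediate from univariate division by the monic polynomial $\omega_{\mathcal F}$ of degree $m$, so the argument closes cleanly.
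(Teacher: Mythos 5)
Your proof is correct and follows essentially the same route as the paper's: divide $l_z$ by the monic degree-$m$ polynomial $\omega_{\mathcal F}$, evaluate the division identity on $\mathcal F$ to get $r(w)=l_z(w)=\delta_{z,w}=l_z^{\mathcal F}(w)$, and conclude by uniqueness of degree-$(m-1)$ interpolation at $m$ points. The only difference is that you make the degree bookkeeping for the remainder explicit, which the paper leaves implicit.
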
 
 \begin{proof} 
 There exists unique $\operatorname{NF}(l_{z})(x)$, polynomial of degree small than $m$, such that
 \begin{equation*}
 l_{z}(x) = q(x) \omega_\mathcal F(x)+\operatorname{NF}(l_{z})(x)
 \end{equation*}
Furthermore, for $a\in \mathcal F$ we have $ l_{z}(a)=\operatorname{NF}(l_{z})(a)=\delta_{z,a}=  l_z^\mathcal F(a)$.
The two polynomials $  l_a^\mathcal F(x)$ and $\operatorname{NF}(l_{z})(x)$ have degree smaller than $m$ and coincide on $m$ points, by interpolation they must be equal.
  \qed   \end{proof}
 
  For a polynomial $p$ of degree $N$, write $p(x) = q(x) \omega_\mathcal F(x) + r(x)$ with $f(z)=r(z)$ if $z\in \mathcal F$ and $ r(x) = \sum_{z\in \mathcal F} p(z) l_z^\mathcal F(x) $.  Let $q(x) = \sum_{j=0}^{N-m} b_j \pi_j(x)$ and $\omega_\mathcal F(x)= \sum_{i=0}^m c_i \pi_i(x)$ as $\omega_\mathcal F$ has degree $m$. Then  
\begin{multline*}
\expect\left(p(X)\right) 
 = \expect\left( \sum_{j=0}^{N-m} b_j \pi_j(X) \sum_{i=0}^m c_i \pi_i(X) \right) + \expect\left(r(X)\right) = \\ b_0 c_0 || \pi_0 ||_\lambda^2 + b_1 c_1 || \pi_1 ||_\lambda^2 + \cdots \\ +  b_{(N-m) \wedge m} c_{(N-m) \wedge m} || \pi_{(N-m) \wedge m}  ||_\lambda^2
 + \sum_{z\in \mathcal F} p(z) \lambda_z^\mathcal F 
\end{multline*} 
where $\lambda_z^\mathcal F = \expect\left( \operatorname{NF}(l_z(X), \langle \omega_\mathcal F(X) \rangle \right)$, $z\in \mathcal F$. 

Note that the error of the Gaussian quadrature rule, 
\begin{multline*}
  b_0 c_0 || \pi_0 ||_\lambda^2 + b_1 c_1 || \pi_1 ||_\lambda^2 + \cdots + \\ b_{(N-m) \wedge m} c_{(N-m) \wedge m} || \pi_{(N-m) \wedge m}  ||_\lambda^2
\end{multline*}
is linear in the Fourier coefficients $b_j$, and also in the Fourier coefficients $c_j$ of the node polynomial. 
This is generalised in Section~\ref{section:HigherDimension}. 
If the fraction $\mathcal F$ coincides with the ambient design $\design_n$ and hence contains $n$ points 
and if $p$ is a polynomial of degree at most $2n-1$, 
then we obtain the well known result of zero error because $(N-n) \wedge n\leq n-1$ and
the only non-zero Fourier coefficient of the node polynomial $\pi_n$ is of order $n$.  
In general one should try to determine pairs of $\mathcal F$ and sets of polynomials for which 
 the absolute value of the errors is minimal.

\begin{comment} 
Finally, for $f\in \reals[x]_{2n-1}$ we can write 
\begin{equation*} 
f(x)=H_n(x) q(x)+r(x)= H_n(x) q(x)+\sum_{i=0}^{m-1} f(x_i) l_i(x) + \sum_{i=m}^n f(x_i) l_i(x) 
\end{equation*}
and by Theorem~\ref{main_theo_onedim} and linearity we have
\begin{equation*}
\expect\left( f(X) \right) = \expect\left( r(X) \right) = \sum_{i=0}^{m-1} f(x_i) \lambda_i + \sum_{i=m}^n f(x_i) \lambda_i 
\end{equation*}
\end{comment}

%%%%%%%%%%%%%%%%%%%%%%%%%%%%%%%%%%%%%%%%%%%%%%%%%%%%%%%%%%%%%%%%

\section{Higher dimension:  zero set of orthogonal polynomials as  design support}\label{section:HigherDimension}%5

In this section we return to the higher dimensional set-up of Section~\ref{section:generalframework} but we restrict ourselves to consider the product measure $\lambda^d=\times_{i=1}^d \lambda$ and $X_1,\ldots,X_d$ independent random variables each one of which is distributed according to the probability law $\lambda$. As design we take a product grid of zeros of orthogonal polynomials with respect to $\lambda$, more precisely our design points or interpolation nodes are
\begin{multline*}
\design_{n_1,\ldots,n_d} = \\ \left\{ x \in \reals^d: \pi_{n_1}(x_1)=\pi_{n_2}(x_2)=\ldots=\pi_{n_d}(x_d) =0 \right\}
\end{multline*}
where $\pi_{n_k}$ is the orthogonal polynomial with respect to $\lambda$ of degree $n_k$.
 
The Lagrange polynomial of the point $y=(y_1,\ldots,y_d)\in \design_{n_1,\ldots,n_d}$ is defined as 
$ l_y(x_1,\ldots,x_d)= \prod_{k=1}^d l_{y_k}^{n_k}(x_k) $, the apex $^{n_k}$ indicates that $l_{y_k}^{n_k}(x_k)$ is the univariate Lagrange polynomial for $y_k \in \{x_k: \pi_{n_k}(x_k)=0\} = \design_{n_k} \subset \reals$. 

The $\operatorname{Span}\left( l_y: y\in \design_{n_1,\ldots,n_d} \right)$ is equal to the linear space generated by the monomials whose exponents lie on the integer grid $\{ 0,\ldots,n_1-1 \}\times \ldots \times \{ 0,\ldots,n_d-1 \}$. 
Any polynomial $f\in \reals[x]$ can be written  as 
\begin{equation*}f(x_1,\ldots,x_d) = \sum_{k=1}^d q_k(x_1,\ldots,x_d) \pi_{n_k}(x_k) +r(x_1,\ldots,x_d)\end{equation*} 
where $r$ is unique, its degree in the variable $x_k$ is smaller than $n_k$, for  $k=1,\ldots,d$, and belongs to that $\operatorname{Span}$. 

The coefficients of the  Fourier expansion of $q_k$ with respect to the variable $x_k$
are functions of $x_1,\ldots,x_d$ but not of $x_k$. 
Let $x_{-k}$ denote the $(d-1)$-dimensional vector obtained from $(x_1,\ldots,x_d)$ removing the $k$-th component and write
\begin{multline*}f(x_1,\ldots,x_d) = \\ \sum_{k=1}^d 
\left( \sum_{j=0}^{+\infty} c_j(q_k)(x_{-k}) \pi_j(x_k) \right)
 \pi_{n_k}(x_k) +r(x_1,\ldots,x_d)\end{multline*} 
Only a finite number of $c_j(q_k)(x_{-k})$ are not zero.
 
From the independence of $X_1,\ldots,X_n$, the expected value  of the Lagrange polynomial $ l_y$ is
\begin{equation*} 
\expect_{ \lambda^d } \left( l_y(X_1,\ldots,X_d) \right) = 
\prod_{k=1}^d \expect_{ \lambda } \left( l_{y_k}^{n_k}(X_k) \right) = 
\prod_{k=1}^d \lambda_k^{n_k}
\end{equation*}
where $\lambda_k^{n_k} = \expect \left( l_{y_k}^{n_k}(X_k) \right) $ is the expected value of a univariate random Lagrange polynomial as in the previous sections.

\begin{theorem}\label{higherdim:theorem} 
It holds
\begin{multline*}
\expect_{\lambda^d} \left( f(X_1,\ldots,X_d)\right) = \\
\sum_{k=1}^d \expect_{\lambda^{d-1}} (c_k(q_k)(X_{-k}) )|| \pi_k||_\lambda^2
+ \\
\sum_{ (x_1,\ldots, x_n) \in \design_{n_1\ldots n_d}}
f(x_1,\ldots,x_d) \lambda_{x_1}^{n_1} \ldots \lambda_{x_d}^{n_d} 
\end{multline*} 
\end{theorem}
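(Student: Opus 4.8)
The plan is to apply $\expect_{\lambda^d}$ to the decomposition
$$f(x_1,\ldots,x_d) = \sum_{k=1}^d q_k(x_1,\ldots,x_d)\,\pi_{n_k}(x_k) + r(x_1,\ldots,x_d)$$
and use linearity to split the computation into two kinds of contributions: the $d$ ``ideal'' terms $\expect_{\lambda^d}(q_k(X)\,\pi_{n_k}(X_k))$ and the single remainder term $\expect_{\lambda^d}(r(X))$. Each is handled by the mechanism already used for the univariate Theorem~\ref{main_theo_onedim}, the only genuinely new ingredient being the independence of the coordinates $X_1,\ldots,X_d$ under the product law $\lambda^d$.

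For a fixed $k$ I would integrate in the variable $x_k$ first, conditioning on $X_{-k}$. Substituting the Fourier expansion $q_k = \sum_{j\ge 0} c_j(q_k)(x_{-k})\,\pi_j(x_k)$ and using that the coefficients $c_j(q_k)(X_{-k})$ are constant with respect to $X_k$ once $X_{-k}$ is fixed, the inner expectation reduces by orthogonality to
$$\expect_\lambda\!\left(\sum_{j\ge 0} c_j(q_k)(X_{-k})\,\pi_j(X_k)\,\pi_{n_k}(X_k)\;\middle|\;X_{-k}\right) = c_{n_k}(q_k)(X_{-k})\,\normof{\pi_{n_k}}_\lambda^2,$$
since $\expect_\lambda(\pi_j(X_k)\pi_{n_k}(X_k)) = \delta_{j,n_k}\normof{\pi_{n_k}}_\lambda^2$. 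Taking the outer expectation over $X_{-k}$ (legitimate because only finitely many $c_j(q_k)$ are nonzero and polynomials are $\lambda^{d-1}$-integrable) yields exactly $\expect_{\lambda^{d-1}}(c_{n_k}(q_k)(X_{-k}))\normof{\pi_{n_k}}_\lambda^2$, which is the $k$-th summand in the first line of the claimed formula, under the shorthand $c_k = c_{n_k}$ and $\pi_k = \pi_{n_k}$.

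For the remainder, since $r$ has degree smaller than $n_k$ in each $x_k$ and agrees with $f$ on $\design_{n_1,\ldots,n_d}$, it admits the Lagrange representation $r = \sum_{y\in\design_{n_1,\ldots,n_d}} f(y)\,l_y$. By linearity together with the product formula $\expect_{\lambda^d}(l_y(X)) = \prod_{k=1}^d \lambda_{y_k}^{n_k}$, already established above from independence and the factorisation $l_y = \prod_k l_{y_k}^{n_k}$, I obtain $\expect_{\lambda^d}(r(X)) = \sum_{y} f(y)\prod_{k=1}^d \lambda_{y_k}^{n_k}$, which is the second line of the formula. Summing the two contributions then completes the argument.

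The main obstacle I anticipate is bookkeeping rather than conceptual: one must justify the interchange of the order of integration via Fubini, supplied by independence, and verify carefully that each $c_j(q_k)$ depends on $x_{-k}$ only, so that single-variable orthogonality in $x_k$ applies verbatim after conditioning. A secondary point worth a brief remark is that the quotients $q_k$ are not individually unique in the multivariate division; however, the combination $\sum_k q_k\,\pi_{n_k}(x_k) = f - r$ is unique because $r$ is, so the sum of the first-line terms equals $\expect_{\lambda^d}(f) - \expect_{\lambda^d}(r)$ and the identity is well-posed independently of the chosen reduction.
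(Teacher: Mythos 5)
Your argument is correct and follows essentially the same route as the paper's proof: decompose $f$ as $\sum_k q_k\pi_{n_k}(x_k)+r$, expand each $q_k$ in the orthogonal system in $x_k$ with coefficients depending only on $x_{-k}$, and use independence plus orthogonality for the first line and the product factorisation of the Lagrange polynomials for the second (the paper writes this out only for $d=2$, remarking that the general case is analogous). Your closing observation that the individual $q_k$ are not unique but that the sum of the corresponding terms is determined by $f-r$ is a worthwhile precision that the paper omits.
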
 

\begin{proof} The proof is very similar to that of Theorem~\ref{main_theo_onedim} and we do it for $d=2$ only. 
In a simpler notation the design is the $n\times m$ grid given by 
$  \design_{nm} =\{(x,y): \pi_n(x)=0=\pi_m(y) \}$ and 
$X$ and $Y$ are independent random variables distributed according to $\lambda$. 
The polynomial $f$ is decomposed as 
\begin{multline*}
f(x,y) = \\ q_1(x,y) \pi_n(x) + q_2(x,y)\pi_n(y) + \sum_{(a,b)\in \design_{n,m} } f(a,b) \  l_a^n(x) l_b^m(y) = \\ \sum_{j=0}^{+\infty} c_j(q_1)(y) \pi_j(x) \ \pi_n(x) + \sum_{j=0}^{+\infty} c_j(q_2)(x) \pi_j(y) \  \pi_n(y) 
  + \\ \sum_{(a,b)\in \design_{n,m} } f(a,b) \ l_a^n(x) l_b^m(y)
\end{multline*}
Taking expectation, using independence of $X$ and $Y$ and orthogonality of the $\pi_i$, we have
\begin{multline*}
\expect_{\lambda^2}\left( f(X,Y) \right) 
 = \\
 \expect_{\lambda}\left( c_n(q_1)(Y) \right) || \pi_n ||_\lambda^2 
 +
 \expect_{\lambda}\left( c_m(q_2)(X) \right) || \pi_m ||_\lambda^2
 + \\ \sum_{(a,b)\in \design_{n,m} } f(a,b) \lambda_a^n \lambda_b^m
\end{multline*}\qed
\end{proof}

Note in the proof above that a sufficient condition for $\expect_{\lambda}\left( c_n(q_1)(Y) \right)$ 
being zero is that $f$ has degree in $x$ smaller then $2n-1$, similarly for $\expect_{\lambda}\left( c_m(q_2)(X) \right)$. 
We retrieve the well-known results that if for each $i$ the degree in $x_i$ of $f$ is smaller than $2 n_i-1$, then  
\begin{equation*}
\expect_{\lambda^d} \left( f(X_1,\ldots,X_d)\right) =
\sum_{ (x_1,\ldots, x_n) \in \design_{n_1\ldots n_d}}
f(x_1,\ldots,x_d) \lambda_{x_1}^{n_1} \ldots \lambda_{x_d}^{n_d} 
\end{equation*} 

In the Gaussian set-up, by Theorem~\ref{weighing_theorem} applied to each variable, weights and nodes satisfy the  polynomial system
\begin{equation} \label{finalHermite}  
\left\{ \begin{array}{ll}
   H_{n_1}(x_1) & = 0  \\
   \lambda_1(x_1) H_{n_1-1}(x_1)^2 & = \displaystyle\frac{(n_1-1)!}{n_1} \\
    % \end{array}   \right.  
& \vdots \\
% \left\{ \begin{array}{ll}
   H_{n_d}(x_d) & = 0 \\
   \lambda_d(x_d) H_{n_d-1}(x_d)^2 & = \displaystyle\frac{(n_d-1)!}{n_d} 
    \end{array}   \right.  
\end{equation}

For the grid set-up of this section and for the Gaussian case, in analogy to Example~\ref{confounding_Example_Hermite_one_dim} 
some Fourier coefficients of polynomials of low enough degree can be determined exactly from the values of the polynomials on the grid points as shown in Example~\ref{confounding_Example_Hermite_higher_dim} below. 

\begin{table*}
\begin{align*}
1_{(0,0)\in\mathcal F}(x,y) & =  \frac23 H_0-\frac13   H_2(y)\\
1_{(\sqrt{3},\sqrt{3})\in\mathcal F}(x,y) & = \frac1{12}  H_0+  \frac1{12} \sqrt{3} H_1(x) +  \frac1{12} \sqrt{3}  H_1(y)+   \frac1{12} H_1(x) H_1(y)+  \frac1{12}  H_2(y)\\
1_{(\sqrt{3},-\sqrt{3})\in\mathcal F}(x,y) & =  \frac1{12}  H_0-  \frac1{12} \sqrt{3} H_1(x) +  \frac1{12} \sqrt{3}  H_1(y)-  \frac1{12} H_1(x) H_1(y)+  \frac1{12}  H_2(y)\\
1_{(-\sqrt{3},\sqrt{3})\in\mathcal F}(x,y) & = \frac1{12}  H_0+  \frac1{12} \sqrt{3} H_1(x) - \frac1{12} \sqrt{3}  H_1(y)-  \frac1{12} H_1(x) H_1(y)+  \frac1{12}  H_2(y)\\
1_{(-\sqrt{3},-\sqrt{3})\in\mathcal F}(x,y) & =  \frac1{12}  H_0-  \frac1{12} \sqrt{3} H_1(x) -  \frac1{12} \sqrt{3}  H_1(y)+   \frac1{12} H_1(x) H_1(y)+  \frac1{12}  H_2(y)
\end{align*}
\caption{Indicator functions for Example \ref{Ex_Hermite-FractionDefinViaPolys}}
  \label{table:Ex_Hermite-FractionDefinViaPolys}
\hrulefill
\end{table*}

\begin{example} \label{confounding_Example_Hermite_higher_dim}

Consider a square grid of size $n$, $\design_{nn}$, and a polynomial $f$ of degrees in $x$ and in $y$ smaller than $n$, the Hermite polynomials and the standard normal distribution. Then we can write 
\begin{equation*} f(x,y) =\sum_{i,j=0}^{n-1} c_{ij} H_i(x) H_j(y) \end{equation*}
As both the degree in $x$ of $fH_k$  and the degree in $y$ of $fH_h$ are smaller than $2n-1$, we have 
\begin{gather*}
\expect\left( f(Z_1,Z_2) H_k(X_1) H_h(X_2) \right)  = c_{hk}   || H_k(X_1) ||^2   || H_h(X_2) ||^2 \\
c_{kh} = \frac1{ k! h!}
\sum_{(x,y)\in \design_{nn}} f(x,y) H_k(x) H_h(y) \lambda_x \lambda_y  
\end{gather*} 
Note if $f$ is the indicator function of a fraction $\mathcal F\subset \design_{nn}$ then
\begin{equation*} c_{kh}  = \frac1{k! h!} \sum_{(x,y)\in \mathcal F} H_k(x) H_h(y) \lambda_x \lambda_y  \qquad \text{with } 0\leq  h,k < n\end{equation*}
\end{example}

Example~\ref{Ex_Hermite-FractionDefinViaPolys} deals with a general design and introduces  the more general theory of Section~\ref{section:genericNoRing}.

\begin{example} 
\label{Ex_Hermite-FractionDefinViaPolys}
 Let  $\mathcal F$ be the zero set of 
\begin{equation*} \left\{
\begin{aligned}
g_1&=x^2-y^2=H_2(x)-H_2(y) = 0\\
g_2&=y^3-3y=H_3(y) = 0\\
g_3&=xy^2-3x=H_1(x) \left(H_2(y)-2H_0\right) =0\end{aligned}
\right.\end{equation*}
namely $\mathcal F$ is given by the five points $(0,0)$, $(\pm \sqrt{3},\pm \sqrt{3}) $.
Write a polynomial $f\in \reals[x,y]$ as $f=\sum q_i g_i + r$ where $r(x,y)=f(x,y)$ for $(x,y)\in \mathcal F$  
and
\begin{multline*}
  r \in \operatorname{Span}\left( H_{0}, H_1(x), H_1(y), H_1(x)H_1(y), H_2(y) \right) = \\ \operatorname{Span}\left( 1,x,y,xy, y^2 \right) .
\end{multline*}

If, furthermore, $f$ is such that
\begin{align*}
q_1(x,y) & =a_0+a_1 H_1(x)+a_2H_1(y)+a_3H_1(x)H_1(y) \\
q_2 & = \theta_1(x)+\theta_2(x)H_1(y)+\theta_3(x)H_2(y) \\
q_3 & = a_4+a_5H_1(y)
\end{align*}
with $a_i, \theta_j\in \reals$ for $i=0,\ldots,5$ and $j=1,\ldots,3$, 
then
\begin{equation*}
  \expect(g_i(Z_1,Z_2) q_i(Z_1,Z_2))=0, \quad i=1,2,3
\end{equation*}
for $Z_1$ and $Z_2$ independent normally distributed random variables.
Write $r$ as a linear combination of the indicator functions of the points in 
$\mathcal F$, i.e.
\begin{equation*}
  r(x,y)=\sum_{(a,b)\in \mathcal F}f(a,b) 1_{(a,b)\in\mathcal F}(x,y)
\end{equation*}

Each  indicator function $1_{(a,b)\in\mathcal F}$ belongs to
\begin{equation*}
  \operatorname{Span}\left( H_{0}, H_1(x), H_1(y), H_1(x)H_1(y), H_2(y) \right) 
\end{equation*}
and are shown in Table~\ref{table:Ex_Hermite-FractionDefinViaPolys}. Their expected values are given by the $H_0$-coefficients. Furthermore, by linearity
\begin{multline*}
  \expect(f(Z_1,Z_2))=\expect(r(Z_1,Z_2)) = \\ \sum_{(a,b)\in \mathcal F}f(a,b) \expect( 1_{(a,b)\in\mathcal F}(Z_1,Z_2))
\end{multline*}
and we can conclude 
\begin{multline*}
\expect(f(Z_1,Z_2))= \expect(r(Z_1,Z_2)) =
\displaystyle 2\frac{ f(0,0)}3 + \\ \displaystyle\frac{ f(\sqrt{3},\sqrt{3})+ f(\sqrt{3},-\sqrt{3})+ f(-\sqrt{3},\sqrt{3})+ f(-\sqrt{3},-\sqrt{3}) }{12}
\end{multline*}
\end{example} 

The key points in Example~\ref{Ex_Hermite-FractionDefinViaPolys} are
\begin{enumerate}
\item determine the class of polynomial functions for which $\expect(g_i(Z_1,Z_2) q_i(Z_1,Z_2))=0$ and
\item determine the $H_0$-coefficients of the indicator functions of the points in $\mathcal F$. 

\end{enumerate}
In Section~\ref{section:genericNoRing} we give algorithms to do this for any fraction $\mathcal F$.

%%%%%%%%%%%%%%%%%%%%%%%%%%%%%%%%%%%%%%%%%%%%%%%%%%%%%%%%%%%%%%%%

\section{Higher dimension: general design support} \label{section:genericNoRing}%6

In the previous sections we considered particular designs whose sample points were zeros of orthogonal polynomials. 
In the Gaussian case we 
exploited the ring structure of the set of functions defined over the design in order to obtain recurrence formula and to write Fourier coefficients of higher order Hermite polynomials in terms of those of lower order Hermite polynomials (Example~\ref{confounding_Example_Hermite_one_dim}). Also we deduced a system of polynomial equations whose solution gives the weights of a quadrature formula. The mathematical tools that allowed this are Equation~(\ref{malliavinderivative}) and the particular structure it implies for Hermite polynomials on the recurrence relation for general, orthogonal polynomials
\begin{equation} \label{recurrence_general}
\pi_{k+1} (x) = (\gamma_k  x -\alpha_k) \pi_k(x) - \beta_k \pi_{k-1}(x)
\qquad x\in \reals
\end{equation}
with $\gamma_k, \alpha_k\neq 0$ and $\alpha_k \gamma_k \gamma_{k-1}>0$ (cf. Theorem \ref{Favard}).

In this section we switch focus and consider  a generic set of points in $\reals^d$ as a design, or nodes for a cubature formula, and a generic set of orthogonal polynomials.  We gain something and lose something. 
The essential computations are  linear: 
such is the computation of a Gr\"obner basis for a finite set of distinct points~\cite{MR680050}; 
the Buchberger M\"oller type of algorithm in Table~\ref{BuchbergerMollerForOrthogonalPolynomial} is based on finding solutions of linear systems of equations; 
in Section~\ref{section:c1} we give a characterisation of polynomials with the same expected values which is a linear expression of some Fourier coefficients and a square free polynomial of degree two in a larger set of Fourier coefficients (see Equation~\ref{cond}).

Given a set of points and a term-ordering 
the algorithm in Table~\ref{BuchbergerMollerForOrthogonalPolynomial} 
returns the reduced Gr\"obner basis of the design ideal expressed as linear combination of orthogonal polynomial of low enough degree.
It does so directly; that is, %, it does not compute first the Gr\"obner basis in the $x$'s and then rewrite it in the $\pi$'s, but
it computes the Gr\"obner basis by working only in the space of orthogonal polynomials.

We lose the equivalent of Theorem~\ref{productHermite} for Hermite polynomials, in particular we do not know yet how to  impose a ring structure on $\operatorname{Span}(\pi_0,\ldots, \pi_n)$ for generic orthogonal polynomials $\pi$ and we miss a general formula to write the product $\pi_k \pi_n$ as linear combination of $\pi_{i}$ with $i=0,\ldots,n\wedge k , n+k$, which is fundamental for the aliasing structure discussed for Hermite polynomials. 

For multivariate cubature formul{\ae} we refer e.g. to~\cite{MR1768956} and~\cite{MR0907119} which, together with~\cite{MR680050}, are basic references for this section. For clarity we repeat some basics and notation.
Let  $\lambda$ be a one-dimensional probability measure and $\{ \pi_n \}_{ n\in \mathbb Z_{\geq 0}}$ be its associated orthogonal polynomial system. 
To a multi-index $\alpha=(\alpha_1,\ldots,\alpha_d)\in \mathbb Z_{\geq 0}^d$ we associate the monomial $x^\alpha=x_1^{\alpha_1}\cdots  x_d^{\alpha_d} $  and the  product of polynomials  $\pi_\alpha(x)= \pi_{\alpha_1}(x_1) \dots \pi_{\alpha_d}(x_d)$. 
Note that $\{ \pi_\alpha \}_\alpha$ is a system of orthogonal polynomials for the product measure $\lambda^d$.
 Theorem~\ref{Correspondence_Monomials_HermitePolynomials} describes the one-to-one correspondence between the $x^\alpha$'s and the $\pi_\alpha(x)$'s.

\begin{theorem} \label{Correspondence_Monomials_HermitePolynomials}
\begin{enumerate} 
\item \label{1} 
For $d=1$ and $k\in \mathbb Z_{\geq 0}$, in the notation of Equation~(\ref{recurrence_general})
 we have that 
\begin{equation*}
x^{k} = \sum_{j=0}^{k} c_{j}(x^k) \pi_{j}(x) 
\end{equation*}
where  $c_0(x^0)=1$, $ c_{-1}(x^0)=c_{1}(x^0) =0$, and, for $k=1,2,\dots$  and $ j=0, \dots, k-1$
 \begin{eqnarray*}
c_{-1}(x^k)&=&c_{k+1}(x^k) =0\\
c_j(x^k) &=& \frac{c_{j-1}(x^{k-1})}{\gamma_{j-1}} + \frac{c_j(x^{k-1})\alpha_j}{\gamma_j} + \frac{c_{j+1}(x^{k-1})\beta_{j+1}}{\gamma_{j+1}}\\
c_k(x^k) &=& \frac 1{\gamma_0\dots \gamma_{k-1}} 
\end{eqnarray*}
\item \label{2}
For $d>1$, the monomial $x^\alpha$ is a linear combination of $\pi_\beta$,  with $\beta \le \alpha$ component wise, and vice versa.
In formul{\ae}
\begin{eqnarray}
\pi_\alpha= \sum_{\beta \le \alpha} a_\beta x^\beta  \qquad and \qquad  
x^\alpha =  \sum_{\beta \le \alpha} b_\beta \pi_\beta 
\end{eqnarray}
where $\beta \le \alpha $ holds component wise.
\end{enumerate}
\end{theorem}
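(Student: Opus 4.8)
The plan is to establish Part~\ref{1} by induction on $k$ using the three-term recurrence~(\ref{recurrence_general}), and then to deduce Part~\ref{2} at once by multiplying the one-dimensional expansions across the $d$ coordinates.

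The engine of the argument is a rearrangement of~(\ref{recurrence_general}). Since $\gamma_j\neq 0$ by Favard's theorem (Theorem~\ref{Favard}), solving $\pi_{j+1}(x)=(\gamma_j x-\alpha_j)\pi_j(x)-\beta_j\pi_{j-1}(x)$ for the product $x\pi_j(x)$ gives
\[
x\pi_j(x)=\frac{1}{\gamma_j}\pi_{j+1}(x)+\frac{\alpha_j}{\gamma_j}\pi_j(x)+\frac{\beta_j}{\gamma_j}\pi_{j-1}(x),
\]
which realises multiplication by $x$ as a tridiagonal action on the orthogonal basis. For the induction, the base case $k=0$ is $x^0=1=\pi_0(x)$, so $c_0(x^0)=1$. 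Assuming $x^{k-1}=\sum_{j=0}^{k-1}c_j(x^{k-1})\pi_j(x)$, I would multiply by $x$ and substitute the displayed identity for each $x\pi_j(x)$. Collecting the coefficient of a fixed $\pi_m(x)$, the three contributions come from the $\pi_{j+1}$ term with $j=m-1$, the $\pi_j$ term with $j=m$, and the $\pi_{j-1}$ term with $j=m+1$, yielding exactly the stated recurrence for $c_m(x^k)$.

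The boundary data then fall out automatically: the convention $\pi_{-1}=0$ forces $c_{-1}(x^k)=0$, while $\deg(x^{k-1})=k-1$ gives $c_k(x^{k-1})=c_{k+1}(x^{k-1})=0$, so $c_{k+1}(x^k)=0$ and the recurrence at $m=k$ collapses to $c_k(x^k)=c_{k-1}(x^{k-1})/\gamma_{k-1}$. Unwinding this telescoping relation from $c_0(x^0)=1$ gives the closed form $c_k(x^k)=1/(\gamma_0\cdots\gamma_{k-1})$.

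For Part~\ref{2} I would apply Part~\ref{1} in each coordinate. Writing $x_i^{\alpha_i}=\sum_{\beta_i=0}^{\alpha_i}c_{\beta_i}(x_i^{\alpha_i})\pi_{\beta_i}(x_i)$ and multiplying over $i=1,\ldots,d$, the distributed product is a sum over all multi-indices $\beta$ with $0\le\beta_i\le\alpha_i$, that is $\beta\le\alpha$ componentwise, with coefficient $b_\beta=\prod_{i=1}^d c_{\beta_i}(x_i^{\alpha_i})$; hence $x^\alpha=\sum_{\beta\le\alpha}b_\beta\pi_\beta$. The reverse expansion $\pi_\alpha=\sum_{\beta\le\alpha}a_\beta x^\beta$ follows identically, starting from the fact that each $\pi_{\alpha_i}(x_i)$ is a polynomial of degree $\alpha_i$ and again taking products. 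Since the whole proof is essentially bookkeeping, there is no deep obstacle; the only point demanding care is the index shift in the inductive step, where the coefficient of $\pi_m$ is assembled from three distinct values of $j$, together with the correct treatment of the boundary indices so that no spurious $\pi_{-1}$ or $\pi_{k+1}$ term survives.
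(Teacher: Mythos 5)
Your proposal is correct and follows essentially the same route as the paper's own proof: the same rearrangement of the three-term recurrence into a tridiagonal action of multiplication by $x$, the same induction on $k$ with collection of the coefficient of each $\pi_m$, and the same coordinate-wise product argument for Part~2. The only cosmetic difference is that you phrase the inductive step as passing from $x^{k-1}$ to $x^k$ while the paper passes from $x^k$ to $x^{k+1}$, which is an immaterial index shift.
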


\begin{proof} 
The proof of Item~\ref{1} is by induction and that of Item~\ref{2} follows by rearranging the coefficients in the product.
They are given in Appendix~\ref{Proofs}. \qed   \end{proof}

\begin{example}\label{cor_her} If  $\pi_j$  is the $j$-th Hermite polynomial, then 
Item~1 of Theorem~\ref{Correspondence_Monomials_HermitePolynomials} gives the well known result
\begin{align*}
c_j(x^k) &= 0   &  \text{if   $k+j$ is  odd} \\
c_j(x^k)&= \left ( \begin{array}{c} k \\j \end{array}\right)(k-j-1)!!  & \text{if  $k+j$  is even}
\end{align*} 
\end{example}

Direct application of Theorem~\ref{Correspondence_Monomials_HermitePolynomials} is cumbersome and we need only to characterise the polynomial functions for which the cubature formula is exact. So we proceed by another way. 
The finite set of distinct points $\design\subset \reals^d$ is associated to its vanishing polynomial ideal 
\begin{equation*}
\mathcal I(\design) 
=\left\{ f\in \reals[x]: f(z) =0 \text{ for all } z\in \design  \right\}
\end{equation*} 
Let $LT_\sigma(f)$ or $LT(f)$ denote the largest term in a polynomial $f$ with respect to a term-ordering $\sigma$. 
Let $[f(z)]_{z\in \design}$ be the evaluation vector of the polynomial $f$ at $\design$
and for a finite set of polynomials $G\subset \reals[x]$ let $[g(z)]_{z\in \design, g\in G}$ be the evaluation matrix whose columns are the evaluation vectors at $\design$ of the polynomials in $G$. 
In DoE often this matrix is called the $X$-matrix of $\mathcal D$ and $G$.

As mentioned at the end of Section~\ref{section:generalframework}, the space $\mathcal L(\design)$ of real valued functions defined over $\design$ is a linear space and particularly important vector space bases can be constructed as follows. 
Let $  LT(\mathcal I(\design))  = \langle LT_\sigma(f) : f\in \mathcal I(\design) \rangle$.
If $G$ is the $\sigma$-reduced Gr\"obner basis of $\mathcal I(\design)$, then $ LT(\mathcal I(\design) ) = \langle LT_\sigma(f) : f\in G \rangle$.
Now we can define two interesting vector space bases of $\mathcal L(\design)$. 
Let   $L=\{ \alpha \in \mathbb Z_{\geq 0}^d  : x^\alpha \not\in  LT(\mathcal I(\design))  \}$ and define
\begin{equation*}
\mathcal B=\{ x^\alpha : \alpha  \in  L  \}
\qquad \text{ and } \qquad \mathcal {OB} =\{ \pi_\alpha : \alpha  \in  L  \}
\end{equation*}
\begin{example} \rm 
For $d=2$ and $L=\{ (0,0),(1,0),(0,1),(2,0) \}$ we have $\mathcal B=\{ 1, x,y,x^2\}$ and 
 $\mathcal{OB}= \{1, \pi_1(x),\pi_1(y),\pi_2(x)\}$, since $\pi_0(x)=\pi_0(y)=1$.
 \end{example}
 
The sets $L$, $\mathcal B$ and $\mathcal {OB}$ depend on $\sigma$. 
It is well known that  if $t \in \mathcal{B}$ and $r$ divides $t$, then $r\in \mathcal{B}$; it follows that if $\alpha\in L$ and $\beta \le \alpha$ component wise then also $\beta$ belongs to $L$ and $\pi_\beta$ to $\mathcal {OB}$.
Note that $\sigma$ induces a total ordering also on the orthogonal polynomials: $\pi_\alpha
< _\sigma \pi_\beta$ if and only if $x^\alpha<_\sigma x^\beta$; analogously $\alpha <_\sigma \beta$ if and only if $x^\alpha <_\sigma x^\beta$ for each   $\alpha, \beta \in \mathbb Z_{\ge 0}^d$. 
Here we used the same symbol to indicate related orderings over the $\alpha$'s, the $x^\alpha$'s and the $\pi_\alpha$'s. Further, given  
$\alpha \le \beta$ componentwise, since $x^\alpha$ divides $x^\beta$ and 
since $1\leq_\sigma x^{\beta-\alpha}$,  we have $x^\alpha \le _\sigma x^\beta$, that is $\alpha \le_\sigma \beta	 $.

Now, given a term-ordering $\sigma$, 
any $g\in G$ can be uniquely written as its leading term, $x^\alpha=LT(g)$, and tail which is a linear combination of terms in $\mathcal B$ preceding   $LT(g)$ in $\sigma$, that is $g=x^\alpha+\sum_{\beta \in L, \beta <_\alpha \alpha} a_\beta x^\beta $ with   $a_\beta\in \reals$. 

Theorem~\ref{RemainderOverHermiteBasis}  provides an alternative to the classical method of rewriting a polynomial $f$ in terms of orthogonal polynomials. It does so by applying Theorem~\ref{Correspondence_Monomials_HermitePolynomials} and by substituting each monomial in $f$.
Theorem~\ref{RemainderOverHermiteBasis} gives linear rules to write the elements of $G$ and the remainder of a polynomial divided by $G$ as linear combinations of orthogonal polynomials of low enough order. The proof is in Appendix~\ref{Proofs}.

\begin{theorem}  \label{RemainderOverHermiteBasis}

\noindent
\begin{enumerate}
\item  $\operatorname{Span}(\mathcal B) = \operatorname{Span}(\mathcal{OB})$;
\item Let $G$ be the reduced $\sigma$-Gr\"obner basis of $\mathcal I(\design)$. Each  $g\in G$ with $LT(g)=x^\alpha$ is uniquely written as 
\begin{equation*}
g= \pi_\alpha - \sum_{\beta \in L , \beta <_\sigma \alpha} b_\beta \pi_\beta
\end{equation*}
where  $b=[b_\beta]_{\beta \in L, \beta <_\sigma \alpha}$ solves the linear system
\begin{equation*}
  \left[\pi_\beta(z) \right]_{z\in \design, \beta \in L, \beta <_\sigma \alpha} b = \left[ \pi_\alpha(z) \right]_{ z\in \design};
\end{equation*}
in words the coefficient matrix is the evaluation matrix over $\design$ of the orthogonal polynomials $\pi_\beta$ with $x^\beta$ in tail of $g$
and the vector of constant terms is the evaluation vector of $\pi_\alpha$.
\item Let $p\in \reals[x]$  be a polynomial and $ [p(z)]_{z\in \design}$ its evaluation vector.
The polynomial $p^*$ defined as 
\begin{eqnarray}\label{int_herm}
 p^*= \sum_{\beta \in L} a_\beta \pi_\beta
 \end{eqnarray}
where  $a=[a_\beta]_{\beta\in L}$ solves the linear system
\begin{equation*}
  \left[ \pi_\beta(z) \right]_{z\in \design,  \beta \in L} a = \left[ p(z) \right]_{ z\in \design}
\end{equation*}
is the unique polynomial belonging to $ \operatorname{Span}(\mathcal{OB})$ such that
 $p^*(z)=p(z)$ for all $z\in \design$.
\end{enumerate}
 \end{theorem}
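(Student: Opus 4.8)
The plan is to handle the three items in order, using throughout the triangularity supplied by Theorem~\ref{Correspondence_Monomials_HermitePolynomials} (each $x^\alpha$ expands over $\{\pi_\beta:\beta\le\alpha\}$ and each $\pi_\alpha$ over $\{x^\beta:\beta\le\alpha\}$, componentwise) together with the two facts recalled before the statement: $L$ is a lower set for the componentwise order, and $\alpha\le\beta$ componentwise implies $\alpha\le_\sigma\beta$. For Item~1 I would argue by double inclusion at the polynomial level. If $\alpha\in L$ then every $\beta\le\alpha$ lies in $L$, so the expansion $x^\alpha=\sum_{\beta\le\alpha}b_\beta\pi_\beta$ shows $x^\alpha\in\operatorname{Span}(\mathcal{OB})$, giving $\operatorname{Span}(\mathcal B)\subseteq\operatorname{Span}(\mathcal{OB})$; the expansion of $\pi_\alpha$ over monomials gives the reverse inclusion the same way. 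Since $\mathcal B$ is a basis of $\mathcal L(\design)$, the equality of spans forces $\mathcal{OB}$, of the same cardinality $|L|=|\design|$, to be a basis as well; equivalently the square evaluation matrix $[\pi_\beta(z)]_{z\in\design,\,\beta\in L}$ is invertible. I would record this invertibility explicitly, since Items~2 and~3 rest on it.

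Item~3 then follows immediately: the square system $[\pi_\beta(z)]_{z\in\design,\,\beta\in L}\,a=[p(z)]_z$ has a unique solution, the polynomial $p^*=\sum_{\beta\in L}a_\beta\pi_\beta$ lies in $\operatorname{Span}(\mathcal{OB})$ and matches $p$ on $\design$ by construction, and uniqueness is the injectivity of evaluation on $\operatorname{Span}(\mathcal{OB})$ (any element vanishing on $\design$ is zero, again by invertibility). One may note in passing that $p^*=\operatorname{NF}(p)$ rewritten over $\mathcal{OB}$, since $\operatorname{NF}(p)\in\operatorname{Span}(\mathcal B)=\operatorname{Span}(\mathcal{OB})$ agrees with $p$ on $\design$.

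Item~2 is the substantive one. First I would show the displayed system is solvable by computing $\operatorname{NF}(\pi_\alpha)$. Expanding $\pi_\alpha=\sum_{\delta\le\alpha}a_\delta x^\delta$ and using that reduction only lowers the term order, each $\operatorname{NF}(x^\delta)$ with $\delta\le\alpha$ is supported on standard monomials $x^\gamma$ with $\gamma\in L$ and $\gamma<_\sigma\alpha$; hence $\operatorname{NF}(\pi_\alpha)\in\operatorname{Span}\{\pi_\beta:\beta\in L,\ \beta<_\sigma\alpha\}$, the set $\{\beta\in L:\beta<_\sigma\alpha\}$ being itself closed under passing to componentwise-smaller indices so that Item~1's argument applies to it verbatim. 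Writing $\operatorname{NF}(\pi_\alpha)=\sum_{\beta\in L,\,\beta<_\sigma\alpha}b_\beta\pi_\beta$ and evaluating on $\design$ shows this $b$ solves the system, while uniqueness of $b$ is the full column rank inherited from Item~1. Consequently $\pi_\alpha-\sum_\beta b_\beta\pi_\beta=\pi_\alpha-\operatorname{NF}(\pi_\alpha)\in\mathcal I(\design)$.

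The crux, and the step I expect to need the most care, is the identification $\pi_\alpha-\operatorname{NF}(\pi_\alpha)=g$, i.e.\ that this polynomial is precisely the reduced Gr\"obner basis element $x^\alpha-\operatorname{NF}(x^\alpha)$ with leading term $x^\alpha$. The decisive point is that, because $g$ belongs to the \emph{reduced} basis, $x^\alpha$ is a minimal generator of $LT(\mathcal I(\design))$, so every proper divisor of $x^\alpha$ is a standard monomial. Since $\pi_\alpha=\prod_i\pi_{\alpha_i}(x_i)$ involves only monomials $x^\delta$ with $\delta\le\alpha$ componentwise, every monomial of $\pi_\alpha$ other than its (monic) leading term $x^\alpha$ is standard; thus $\pi_\alpha-x^\alpha$ is already reduced and equal to its own normal form. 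By linearity of $\operatorname{NF}$ this yields $\pi_\alpha-\operatorname{NF}(\pi_\alpha)=x^\alpha-\operatorname{NF}(x^\alpha)=g$, completing Item~2. The monic normalization of the $\pi_k$ (as in Section~\ref{section:OrthogonalPolynomialsOnTheLine}) is what makes the leading coefficient match; without it one would recover $g$ only up to the scalar $LT$-coefficient of $\pi_\alpha$.
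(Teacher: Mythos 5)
Your proof is correct. Items~1 and~3 follow the paper's own argument essentially verbatim (double inclusion via the triangular change of basis of Theorem~\ref{Correspondence_Monomials_HermitePolynomials} plus the lower-set property of $L$, then invertibility of the square evaluation matrix). For Item~2 you take a genuinely different route: the paper starts from the monomial form $g=x^\alpha-\sum_{\beta\in L,\,\beta<_\sigma\alpha}c_\beta x^\beta$ and pushes each monomial through the triangular expansion, checking index by index that every $\pi_\gamma$ appearing besides $\pi_\alpha$ has $\gamma\in L$ and $\gamma<_\sigma\alpha$; you instead identify $g$ with $\pi_\alpha-\operatorname{NF}(\pi_\alpha)$ by observing that $\pi_\alpha-x^\alpha$ is supported on proper divisors of $x^\alpha$, which are all standard because $x^\alpha$ is a minimal generator of $LT(\mathcal I(\design))$, so that $\pi_\alpha-x^\alpha$ equals its own normal form and linearity of $\operatorname{NF}$ gives $\pi_\alpha-\operatorname{NF}(\pi_\alpha)=x^\alpha-\operatorname{NF}(x^\alpha)=g$. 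Both arguments ultimately rest on the same structural fact about reduced Gr\"obner bases --- every proper divisor of $LT(g)$ lies in $\mathcal B$ --- but you make it explicit and load-bearing, whereas the paper invokes it only implicitly (its stated justification, that $LT(g)$ is ``a multiple by some $x_j$ of an element of $\mathcal B$'', is weaker than what its own step ``$\gamma<\alpha$ then $\gamma\in L$'' actually uses). The paper's computation is more self-contained and does not need the normal-form operator at all; your version is shorter once $\operatorname{NF}$ is available, isolates the one genuinely delicate point, and also records along the way the useful identity $p^*=\operatorname{NF}(p)$ rewritten over $\mathcal{OB}$. Your closing remark about the monic normalization of the $\pi_k$ is apt: it is exactly what makes the leading coefficients match without a rescaling.
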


Theorem~\ref{RemainderOverHermiteBasis} % shows that the Lagrange interpolation problems on $(\design, \operatorname{Span}(\mathcal B))$ and $(\design, \operatorname{Span}(\mathcal{OB}))$ coincide. It also 
provides a pseudo-algorithm to compute a Gr\"obner basis for $\mathcal I(\design)$  and the interpolating polynomial at  $\design$ in terms of orthogonal polynomials of low order directly from $\design$ and $\mathcal{OB}$. 
Table~\ref{BuchbergerMollerForOrthogonalPolynomial} gives the algorithm which is a variation of the Buchberger-M\"oller algorithm~\cite{MR680050}. It starts with a finite set  of distinct points $\design$ and a term-ordering $\sigma$ and it returns $L$ and the expressions $g= \pi_\alpha - \sum_{\beta \in L, \beta <_\sigma \alpha} b_\beta \pi_\beta$ for $g$ in the reduced $\sigma$-Gr\"obner basis of $\mathcal I(\design)$. It does so by performing linear operations.
If the real vector $[p(z)]_{z\in \design}$ is assigned, then the expression  $p^*= \sum_{\beta\in L} a_\beta \pi_\beta$ can now be found using Item~3 in Theorem~\ref{RemainderOverHermiteBasis}. This permits to rewrite every polynomial $p\in \reals[x]$ as a linear combination of orthogonal polynomials. 

The algorithm in Table~\ref{BuchbergerMollerForOrthogonalPolynomial} returns the $\sigma$-reduced Gr\"obner basis as linear combination of orthogonal polynomials. 
It performs  operations only with orthogonal polynomials and in particular it does not involve at any step the $x^\alpha$ monomials.
This is computationally faster than first computing a classical Gr\"obner basis in the $x^\alpha$ and next substituting the $\pi_\alpha$.
Furthermore working with only one vector space basis, and not switching between the $x^\alpha$ and the $\pi_\alpha$, is conceptually appealing.

\begin{table}[h] 
\hrulefill
\begin{description}
\item[\bf Input:] a set $\design$ of distinct points in $\reals^d$, a term-ordering $\sigma$ and any vector norm $||\cdot ||$.
\item[\bf Output:] the reduced $\sigma$-Gr\"obner basis $G$ of  $\mathcal I(\design)$ as linear combination of orthogonal polynomials and the set $L$.
\item[Step~1] Let  $L=\{0 \in Z_{\geq 0}^d  \}$, $\mathcal{OB}=[ 1]$, $G=[\;]$ and $M=[x_1,\dots,x_d]$.
\item[Step~2] If $M=[\;]$ stop; else set $x^\alpha=\min_{\sigma}(M)$ and delete $x^\alpha$ from $M$.
\item[Step~3] Solve in $b$ the overdetermined linear system 
$\left[\pi_\beta(z)\right]_{z\in \design,  \beta \in L} b = \left[ \pi_\alpha(z) \right]_{z\in \design}$ and compute the residual 
$$\rho=  \left[ \pi_\alpha(z) \right]_{z\in \design} - \left[\pi_\beta(z)\right]_{z\in \design,  \beta \in L} b $$
\item[Step~4] $\phantom{nn}$ \\
	\begin{enumerate}
		\item If $\|\rho\| > 0$, then include $\alpha$ in $L$, 
		and include in $M$ those elements of 
		 $\{x_1 x^\alpha, \ldots,x_d x^\alpha\}$ which are not multiples of an element in~$M$ or of $LT(g)$, $g\in G$. Return to Step~2. 
	\item If $\|\rho\| = 0$, then include  in $G$ the polynomial 
\begin{equation*}
g=\pi_\alpha - \sum_{\beta \in L} b_\beta \pi_\beta
\end{equation*}	
	where the	 values $b_\beta$, $\beta \in L$, are the components of the solutions $b$ of the linear system in Step~3. Delete from~$M$  all multiples of~$x^\alpha$.
	\end{enumerate}
\end{description}
\caption{Buchberger-M\"oller algorithm using orthogonal polynomials}
\label{BuchbergerMollerForOrthogonalPolynomial}
\hrulefill
\end{table}

Summarising: given a function $f$, a finite set of distinct points $\design\subset \reals^d$ and a term-ordering $\sigma$, a probability product measure $\lambda^d$ over $\reals^d$, its  system of product orthogonal polynomials, and a random vector with probability distribution $\lambda^d$, then
the expected value of $f$ with respect to $\lambda^d$ can be approximated by
\begin{enumerate}
\item computing $L$ with the algorithm in Table~\ref{BuchbergerMollerForOrthogonalPolynomial} and
%\item determining the unique polynomial $p^*$ such that $p^*(z)=f(z)$ for all $z\in\design$, by solving the linear system $\left[ \pi_\beta(z) \right]_{z \in \design  , \beta\in L} a =\left[ f(z) \right]_{z\in \design}$. The polynomial $p^*$ is expressed as linear combination of orthogonal polynomials.
\item determining, by solving the linear system 
$$\left[ \pi_\beta(z) \right]_{z \in \design  , \beta\in L} a =\left[ f(z) \right]_{z\in \design},$$
  the unique polynomial $p^*$ such that $p^*(z)=f(z)$ for all $z\in\design$. The polynomial $p^*$ is expressed as linear combination of orthogonal polynomials.
\item The coefficient $a_0$ of $\pi_0$ is the required approximation.
\end{enumerate} 
Recall that $p^*(x)=\sum_{z\in \design} f(z) l_z(x)$ is a linear combination of the indicator functions of the points in $\design$ (Lagrange polynomials) and hence
$a_0 = \sum_{z\in \design} f(z) \expect \left( l_z(X) \right)$. In particular, $\expect \left( l_z(X) \right)=\lambda_z$, $z\in \design$, can be computed by applying the above to $f=l_z$. Notice however that as $\lambda^d$ is a product measure, the $\lambda_z$ can be obtained from the one-dimensional ones as noticed before Theorem~\ref{higherdim:theorem}.
It would be interesting to generalise this section to non-product measures.

Here an algorithm has been provided to approximate the expected value of polynomials. 
Next the set of  polynomials whose expected value coincides with the value of the cubature formula is characterised.
In Section~\ref{section:c1} we provide a characterisation of the full set 
 via linear relationships that Fourier coefficients of suitable polynomials have to satisfy, 
 while in Section~\ref{section:c2} a possibly proper subset has been characterised via a simple condition on the total degree of the polynomials.

\subsection{Characterisation of polynomial functions with zero expectation}\label{section:c1} 

\begin{table*}\begin{equation*} \begin{array}{llllll}
 c_0^{(1)}= -34 \quad &  c_{(1,0)}^{(1)} =0  \quad &    c_{(0,1)}^{(1)}=-2  \quad &  c_{(0,0)}^{(1)}=-8  \\
 c_0^{(2)}=0    \quad &  c_{(2,0)}^{(2)} =0  \quad & c_{(1,1)}^{(2)}=1      \quad &  c_{(1,0)}^{(2)} =-2   \quad &    c_{(0,1)}^{(2)}=1  \quad &  c_{(0,0)}^{(2)}  = 2\\
 c_0^{(3)}=10    \quad &  c_{(2,0)}^{(3)} =2 \quad &  c_{(1,1)}^{(3)}=0     \quad &   c_{(1,0)}^{(3)} =-5  \quad &    c_{(0,1)}^{(3)}=1 \quad & c_{(0,0)}^{(3)}  = 5863/2987
\end{array}  \end{equation*}
\caption{A solution for \eqref{cond}\label{table:cond}}
\hrulefill
\end{table*}

In this section we characterise the set of polynomials whose expected value coincides with the value of the cubature formula.
As mentioned in Section~\ref{section:generalframework} given $\design\in \mathbb R^d$, its vanishing ideal $\mathcal I(\design)$, a term-ordering $\sigma$ and the Gr\"obner basis $G$ of $\mathcal I(\design)$ with respect to $\sigma$, then any polynomial $f\in\reals[x]$ can be written as 
\begin{equation*}
f(x)= \sum_{g\in G} q_g(x) g(x) + r_\sigma(x)   
\end{equation*}
where $r_\sigma(x)$ is  unique in $\operatorname{Span}(\mathcal B)$ such that $r_\sigma(z)=f(z)$ for all $z\in \design$ and can be written as $r_\sigma(x) = \sum_{z\in \design}f(z) l_z(x)$, where the $l_z$'s are the product Lagrange polynomials in Section~\ref{section:HigherDimension}. 
Theorem~\ref{RemainderOverHermiteBasis} states how to write $r_\sigma$ over $\mathcal{OB}$. 

If $f\in \reals[x]$ is such that  $\expect_{\lambda} \left( f(X) \right) = \expect_{\lambda} \left( r_\sigma(X) \right)$  then
%\begin{equation*}
we have $ \expect_{\lambda} \left( f(X) - r_\sigma(X) \right) = 0
$, %\end{equation*}Furthermore we have 
where $f-r_\sigma\in \mathcal I(\design)$. 
%Hence instead of studying directly the set
Hence to study the set
\begin{equation*}
\mathcal E_\sigma =\left\{ f\in \reals[x]: \expect_{\lambda} \left( f(X) \right) = \expect_{\lambda} \left( r_\sigma(X) \right) \right\}
\end{equation*}
is equivalent to characterize the set
\begin{equation*}
\mathcal E_0= \left\{ p\in \mathcal I(\design): \expect_{\lambda} \left( p(X) \right) =0 \right\}
\end{equation*}
We do this in two ways. 
First we study the Fourier expansion of the elements of $\mathcal E_0$, 
next we present some results about the degree of the elements belonging to $\mathcal E_\sigma$. %The proofs of the following theorem can be found in Appendix~\ref{Proofs}.

The elements of $\mathcal E_0$ are characterized in Theorem~\ref{mio}. 
Note that if $f\in\reals[x]$ is such that
 $f=p+r_\sigma$ with $p\in \mathcal E_0$ 
 and $r_\sigma \in \operatorname{Span}(\mathcal {B})$ then  by linearity and independence 
 \begin{equation*}
   \expect_{\lambda}(f)=\sum_{(z_1,\ldots,z_n) \in \design} f(z_1,\ldots,z_n) \lambda_{
z_1}^{n_1}\cdots \lambda_{z_d}^{n_d}
 \end{equation*}
\begin{theorem}\label{mio}

Let $\lambda$ be a product probability measure with product orthogonal polynomials $\pi_\alpha(x)$, $\alpha\in \mathbb Z^d_{\geq 0}$ and let $X$ be a random vector with distribution $\lambda^d$. 
Let $\design\subset \reals^d$ be a set of distinct points, $\sigma$ a term-ordering, 
$G$ the $\sigma$-reduced Gr\"obner basis of $\mathcal I(\design)$ whose elements as linear combinations of orthogonal polynomials. Thus for $g\in G$ and $x^{\alpha}=LT(g)$ we write
 $$g=\pi_{\alpha} - \sum_{\alpha >_\sigma \beta \in L} c_\beta(g) \pi_\beta
 $$
 where  ${\alpha >_\sigma \beta \in L}$ stands for $\alpha >_\sigma\beta$ and $\beta \in L$.
 
Let $p =  \sum_{g\in G} q_g g \in \mathcal I(\design)$ for suitable $q_g\in \reals[x]$, and consider the Fourier expansion of each $q_g$ , $g\in G$, 
\begin{eqnarray} \label{strutturaq}
q_g&=& \sum_{ \beta\in\mathbb Z^d_{\geq 0} }        c_\beta(q_g)  \pi_\beta 
\end{eqnarray}

Then $ \expect_{\lambda}\left( p(X) \right)= 0 $ if and only if 
\begin{eqnarray}\label{cond}
 \sum_{g\in G} \|\pi_{\alpha}\|_\lambda^2 c_{\alpha}(q_g) -   \sum_{g\in G} \sum_{\alpha >_\sigma \beta \in L}  \|\pi_\beta\|_\lambda^2 c_\beta(q_g) c_\beta(g) =0
\end{eqnarray}
\end{theorem}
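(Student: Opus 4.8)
The plan is to reduce the whole statement to the single fact that the expected value of a polynomial is its $L^2(\lambda^d)$ inner product against the constant $\pi_0=1$, and then to expand that inner product using orthogonality. Since $\lambda^d$ is a probability measure, $\expect_\lambda\left(p(X)\right)=\int p\,d\lambda^d=\scalarat{\lambda}{p}{1}=\scalarat{\lambda}{p}{\pi_0}$, so $\expect_\lambda(p)$ is just (a multiple of) the zeroth Fourier coefficient of $p$. First I would use $p=\sum_{g\in G}q_g g$ together with linearity of the integral to write $\expect_\lambda(p)=\sum_{g\in G}\expect_\lambda(q_g g)$. The decisive observation is that $\expect_\lambda(q_g g)=\int q_g g\,d\lambda^d=\scalarat{\lambda}{q_g}{g}$: the expectation of the product is exactly the inner product of the two factors. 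This turns the target quantity into $\sum_{g\in G}\scalarat{\lambda}{q_g}{g}$.

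Next I would substitute the two Fourier expansions supplied in the statement, namely $g=\pi_\alpha-\sum_{\alpha>_\sigma\beta\in L}c_\beta(g)\pi_\beta$ with $\alpha$ the exponent of $LT(g)$, and $q_g=\sum_{\gamma}c_\gamma(q_g)\pi_\gamma$. Because $\{\pi_\alpha\}$ is an orthogonal system for the product measure $\lambda^d$ — which holds since $\pi_\alpha$ factors as $\prod_{k}\pi_{\alpha_k}(x_k)$ while $\lambda^d$ factors, giving $\scalarat{\lambda}{\pi_\gamma}{\pi_\delta}=\delta_{\gamma\delta}\normof{\pi_\gamma}_\lambda^2$ — all cross terms drop out and every inner product collapses onto the matching index. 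This yields $\scalarat{\lambda}{q_g}{g}=\normof{\pi_\alpha}_\lambda^2 c_\alpha(q_g)-\sum_{\alpha>_\sigma\beta\in L}\normof{\pi_\beta}_\lambda^2 c_\beta(q_g)c_\beta(g)$. Summing over $g\in G$ reproduces verbatim the left-hand side of \eqref{cond}, and imposing $\expect_\lambda(p)=0$ gives the claimed equivalence in both directions at once.

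There is no genuinely hard analytic step here; once orthogonality is invoked the computation is purely bookkeeping. The one point I would flag for the reader is that the representation $p=\sum_{g}q_g g$, and hence the coefficients $c_\beta(q_g)$, is not unique, as already recalled after \eqref{eq:1}. This is harmless for the theorem because the entire left-hand side of \eqref{cond} equals the intrinsic quantity $\expect_\lambda(p)$, so its numerical value is independent of the chosen representation even though the individual summands are not; I would add a sentence to make clear that the condition is therefore well posed regardless of how $p$ is written over $G$.
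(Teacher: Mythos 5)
Your proposal is correct and follows essentially the same route as the paper's own proof: decompose $\expect_\lambda(p)$ by linearity over the $q_g g$, substitute the two Fourier expansions, and let orthogonality of the product system $\{\pi_\alpha\}$ collapse each term to $\normof{\pi_\alpha}_\lambda^2 c_\alpha(q_g)-\sum_{\alpha>_\sigma\beta\in L}\normof{\pi_\beta}_\lambda^2 c_\beta(q_g)c_\beta(g)$. Your closing remark that the left-hand side of \eqref{cond} is intrinsically equal to $\expect_\lambda(p)$, hence independent of the non-unique representation $p=\sum_g q_g g$, is a worthwhile addition that the paper leaves implicit.
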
 
\begin{proof}
The key observation is that $\expect_{\lambda}\left( \pi_m \ \pi_n \right) =0 $ if $n\neq m$ and then linearity of $\expect_{\lambda}$ is used.  The proof can be found in Appendix~\ref{Proofs}.
\qed   \end{proof}
 Importantly, only terms of low enough Fourier order  in Equation~(\ref{strutturaq}) matter for the computation of the expectation. 

\begin{example}\label{ex_zero_exp}
Consider $Z_1$ and $Z_2$ two independent standard normal random variables and hence the Hermite polynomials.
Consider also the five point design 
$$\design=\left\{ (-6,-1), (-5,0), (-2,1), (3,2),(10,3) \right\}$$
 and the $\sigma$$=${\tt{DegLex}} term-ordering {over the monomials in $\reals[x,y]$}. The algorithm in Table~\ref{BuchbergerMollerForOrthogonalPolynomial} gives $G=\{g_1,g_2,g_3\} $ where
\begin{eqnarray*}
g_1 & = &H_2(y) - H_1(x) +2 H_1(y) -4  \\
g_2 & =& H_2(x)H_1(y) -9    H_2(x) +47    H_1(x)H_1(y) -123    H_1(x)\\
& &+271    H_1(y) -399  \\
g_3 & =& H_3(x) -47    H_2(x) +300    H_1(x)H_1(y) -845    H_1(x)\\
&& +2040    H_1(y) -2987 
\end{eqnarray*} 
and $L=\{ (0,0), (1,0), (0,1) ,(1,1), (0,2)\}$, that is 
\[
\mathcal{OB} =\{1, H_1(y), H_1(x),  H_1(x)H_1(y), H_2(x)  \} 
\]
By Theorem~\ref{mio} for the purpose of computing its expectation a polynomial $p = q_1g_1+q_2g_2+q_3g_3 \in \mathcal I(\design)$ can be simplified to have the form 
\begin{multline*}
p  =  \left(  c_0^{(1)} H_2(y) +c_{(1,0)}^{(1)} H_1(x) +c_{(0,1)}^{(1)} H_1(y) +c_{(0,0)}^{(1)} \right) g_1  + \\ \left(  c_0^{(2)}H_2(x)H_1(y) + c_{(2,0)}^{(2)}H_2(x)+ c_{(1,1)}^{(2)} H_1(x)H_1(y) + \right. \\ \left. c_{(1,0)}^{(2)} H_1(x)
  + c_{(0,1)}^{(2)} H_1(y) + c_{(0,0)}^{(2)} \right)  g_2
+ \left( c_0^{(3)} H_3(x) + \right. \\ c_{(2,0)}^{(3)}H_2(x)+ c_{(1,1)}^{(3)} H_1(x)H_1(y) +c_{(1,0)}^{(3)} H_1(x) +\\ \left. c_{(0,1)}^{(3)} H_1(y) + c_{(0,0)}^{(3)} \right)     g_3 
\end{multline*}
and furthermore by Equation~(\ref{cond})
\begin{multline*}
c_0^{(1)}    2! - c_{(1,0)}^{(1)} + 2c_{(0,1)}^{(1)} -4c_{(0,0)}^{(1)} +c_0^{(2)} 2! -9    c_{(2,0)}^{(2)} 2! + 47 c_{(1,1)}^{(2)} \\
  -123 c_{(1,0)}^{(2)}  +271 c_{(0,1)}^{(2)} -399c_{(0,0)}^{(2)}  + c_0^{(3)}     3! -47   c_{(2,0)}^{(3)}  2!  \\
 + 300    c_{(1,1)}^{(3)} -845 c_{(1,0)}^{(3)}  +2040 c_{(0,1)}^{(3)} -2987    c_{(0,0)}^{(3)}  =0
\end{multline*}
In practice, for $i=1,2,3$, put  coefficients of $g_i$ and $q_i$ in two vectors, multiply them component wise and sum the result.
There are infinite polynomials that satisfy the above equations,
one such polynomial is given in Table~\ref{table:cond}. 
The above equation involves only a finite number of Fourier coefficients, namely
 $c_\beta(q_g)$'s is relevant for the equation if and only if
 $\beta\in L$ and $x^\beta$ is smaller in $\sigma$ than the leading terms of $g\in G$. 
Hence if to $q_g$ we add a polynomial of the form $\sum_{\beta>\alpha \text{ or } \beta\not\in L} c_\beta H_\beta$
we still obtain a zero mean  polynomial. That is, we can modify $q_g$ by adding high enough terms without changing the mean value.

% example the above equation is satisfied by the values 
For example by adding $H_4(x)$ to $q_1$ and $ H_4(y)$ to $q_2$ we obtain the following zero mean polynomial 
\begin{multline*}
 p(Z_1,Z_2) =\\ Z_1^2Z_2^5 + 10Z_1^6 + Z_1^4Z_2^2 - 9Z_1^2Z_2^4 + 47Z_1Z_2^5 - 469Z_1^5 + \\ 3002Z_1^4Z_2 + Z_1^3Z_2^2 - 6Z_1^2Z_2^3 - 123Z_1Z_2^4 + 270Z_2^5 - \\ 8614Z_1^4 + 20990Z_1^3Z_2 + 96Z_1^2Z_2^2 - 282Z_1Z_2^3 - 424Z_2^4 \\
 - 87898560/2987Z_1^3 - 6700Z_1^2Z_2 + 1389Z_1Z_2^2 - 1690Z_2^3 + \\ \frac{71785814}{2987}Z_1^2 - \frac{218275468}{2987} Z_1Z_2 + 4845Z_2^2 + \\ \frac{307862660}{2987}Z_1 - \frac{5937584}{2987}Z_2 - \frac{5931425}{2987}
\end{multline*}
\end{example}

\subsection{On exactness of cubature formul{\ae}}\label{section:c2}
Here we adopt another viewpoint and  characterise the set $\mathcal E_\sigma$. 
Instead of studying the Fourier expansion of its polynomials, we focus our attention on their degree. 
Given a degree compatible term ordering $\sigma$, we show how to compute  the maximum degree $s$ such that
$\mathbb R[x]_{\leq s}$ is in $\mathcal E_\sigma$, that is the degree of the cubature formula with nodes $\design$.
Our strategy is based on the definition of $s$-orthogonal polynomials~\cite{MR0907119}.

\begin{definition}
A polynomial $g\in \reals[x] $ is $s$-orthogonal if $s \in \mathbb Z_{>0}$ is the maximum integer such that
$$ fg \in \reals[x]_{\leq s} \;\;\;\; \text{ implies} \;\;\;\;\expect_{\lambda} \left( f(X)g(X)\right) =0$$
Furthermore, a set $G$ of polynomials is $s$-orthogonal if each $g\in G$  is $s(g)$-orthogonal and $s= \min_{g\in G} s(g)$.
\end{definition}

Theorem~\ref{Teo_Grado} reformulates and summarizes two theorems about the degree of a cubature formula presented in~\cite{MR1768956} and~\cite{MR0907119} (where H-bases are considered). 

\begin{theorem}\label{Teo_Grado}
Given a set $\design$ and a degree compatible term ordering $\sigma$  the following conditions are equivalent.
\begin{enumerate}
\item $\reals[x]_{\leq s} \subset \mathcal E_\sigma $;
\item $\expect_{\lambda} \left( f(X) \right)=0$ for all $f \in \mathcal I(\design) \cap \reals[x]_{\leq s}$;
\item the $\sigma$-Gr\"ob\-ner basis $G$ of $\mathcal I(\design)$ is $s$-orthogonal.
\end{enumerate} 
\end{theorem}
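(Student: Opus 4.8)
The plan is to prove the cycle $(1)\Leftrightarrow(2)$ and $(2)\Leftrightarrow(3)$, using throughout that $\sigma$ is degree compatible so that neither the normal form nor the Gr\"obner representation raises total degree. I would first dispatch $(1)\Leftrightarrow(2)$, which is essentially bookkeeping on the decomposition $f=\sum_{g\in G}q_g g + r_\sigma$. Recall from the opening of this section that $f\in\mathcal E_\sigma$ exactly when $f-r_\sigma\in\mathcal E_0$, and that $f-r_\sigma\in\mathcal I(\design)$ always, since $r_\sigma$ agrees with $f$ on $\design$. For $(1)\Rightarrow(2)$, take any $p\in\mathcal I(\design)\cap\reals[x]_{\leq s}$; its normal form is $r_\sigma=0$, so $(1)$ applied to $p$ gives $\expect_\lambda(p(X))=0$. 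For $(2)\Rightarrow(1)$, take $f\in\reals[x]_{\leq s}$; since $\sigma$ is degree compatible, $\deg r_\sigma\leq\deg f\leq s$, hence $f-r_\sigma\in\mathcal I(\design)\cap\reals[x]_{\leq s}$, and $(2)$ forces $\expect_\lambda(f(X))=\expect_\lambda(r_\sigma(X))$, i.e. $f\in\mathcal E_\sigma$.

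The substantive part is $(2)\Leftrightarrow(3)$, where the key tool is the standard representation afforded by a Gr\"obner basis: for $p\in\mathcal I(\design)$ there exist $q_g$ with $p=\sum_{g\in G}q_g g$ and $\deg(q_g g)\leq\deg p$ for every $g$, a consequence of the division algorithm relative to $G$ together with degree compatibility of $\sigma$. For $(3)\Rightarrow(2)$, $s$-orthogonality of $G$ means $s\leq s(g)$ for each $g$; given $p\in\mathcal I(\design)\cap\reals[x]_{\leq s}$, each summand satisfies $q_g g\in\reals[x]_{\leq s}\subseteq\reals[x]_{\leq s(g)}$, so $\expect_\lambda(q_g(X)g(X))=0$ by the defining property of $s(g)$-orthogonality, and summing yields $\expect_\lambda(p(X))=0$. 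For the converse I would argue by contraposition on the value of $s$: if some $g_0\in G$ had $s(g_0)<s$, then by maximality of $s(g_0)$ there is an $f$ with $fg_0\in\reals[x]_{\leq s(g_0)+1}\subseteq\reals[x]_{\leq s}$ yet $\expect_\lambda(f(X)g_0(X))\neq 0$; but $fg_0\in\mathcal I(\design)\cap\reals[x]_{\leq s}$, contradicting $(2)$. Thus $(2)$ forces $s\leq\min_g s(g)$, and the same witness construction at level $\min_g s(g)+1$ shows $(2)$ fails beyond that threshold, so the maximal $s$ for which $(2)$ holds is exactly $\min_g s(g)$, which is precisely $(3)$.

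The main obstacle I anticipate is the degree bookkeeping in the two representations. Everything hinges on choosing $q_g$ with $\deg(q_g g)\leq\deg p$; this is false for an arbitrary generating set and uses in an essential way both that $G$ is a Gr\"obner basis and that $\sigma$ is degree compatible, so that leading-term cancellation in the division algorithm never conceals a higher-degree contribution. A secondary delicate point is matching the \emph{exact} value $s=\min_g s(g)$ in $(3)$ against conditions $(1)$ and $(2)$, which a priori only bound $s$ from above; this is resolved by reading $s$ as the degree of exactness, namely the maximal level for which $(1)$ and $(2)$ hold, and by exhibiting the explicit witness $fg_0$ that breaks exactness one degree above $\min_g s(g)$.
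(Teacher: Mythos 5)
Your proof is correct and follows essentially the same route as the paper's: the paper closes the cycle $1\Rightarrow 2\Rightarrow 3\Rightarrow 1$ using exactly the ingredients you identify, namely the vanishing of $r_\sigma$ for elements of $\mathcal I(\design)$, the degree-controlled representation $p=\sum_{g\in G}q_g g+r_\sigma$ with $\deg(q_g g)\le\deg(p)$ coming from the Gr\"obner division under a degree compatible $\sigma$, and a direct unpacking of the definition of $s$-orthogonality. Your only departures are organizational: you prove two biconditionals instead of a three-step cycle (your direct $(2)\Rightarrow(1)$ via $\deg r_\sigma\le\deg f$ replaces the paper's $(3)\Rightarrow(1)$), and you handle the maximality clause in ``$s$-orthogonal'' more explicitly than the paper, whose $2\Rightarrow 3$ simply observes that any $f$ with $fg\in\reals[x]_{\leq s}$ yields $fg\in\mathcal I(\design)\cap\reals[x]_{\leq s}$ and hence $\expect_{\lambda}(f(X)g(X))=0$.
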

\begin{proof}
$1 \Rightarrow 2$. Let  $f \in \mathcal I(\design) \cap \reals[x]_{\leq s} $. Since by hypothesis $\reals[x]_{\leq s} \subset \mathcal E_\sigma $, then $f \in \mathcal E_\sigma$ that is $\expect_{\lambda} \left( f(X) \right) = \expect_{\lambda} \left( r_\sigma(X) \right)$. Since $f \in I (\design)$, we have $r_\sigma(x) \equiv 0$ and so $ \expect_{\lambda}\left( f(X) \right)=0$. \\
$2 \Rightarrow 3$. For each $g \in G$ let $f$ be such that $fg \in \reals[x]_{\leq s}$. Since $fg \in  \mathcal I(\design) \cap \reals[x]_{\leq s}$ then 
$\expect_{\lambda}\left( f(X)g(X) \right)=0$ and so $g$ is $s$-orthogonal. \\
$3 \Rightarrow 1$. 
 For $p\in\reals[x]_{\leq s}$, we have that
$$p=\sum_{g \in G} gq_g + r_\sigma$$
where each $ gq_g$  is such that $LT_\sigma(g q_g) \le LT_\sigma(p)$ and so, since $\sigma $ is degree compatible, $\deg(g q_g) \le \deg(p)\le s$. It follows that, since $G$ is $s$-orthogonal, $\expect_{\lambda}\left(gq_g \right) =0 $.  By linearity we obtain $\expect_{\lambda}\left( p(X) \right) =
\expect_{\lambda}\left( r_\sigma(X) \right)  $, that is $p \in \mathcal E_\sigma$.
\qed   \end{proof}

\begin{remark}
The maximum integer  $s$ such that  $\expect_{\lambda}\left( f(X) \right) =0 $ for each $f \in \mathcal I(\design) \cap \reals[x]_{\leq s}$ is the degree of the cubature formula with nodes $\design$ and with respect to $\sigma$.
\end{remark}

Theorem~\ref{Teo_Grado} shows that the maximum integer $s$ such that all polynomials of total degree $s$ are in $\mathcal E_\sigma$
coincides with the maximum 
$s$ such that $G$ is $s$-orthogonal. Hence we focus our attention on the $s$-orthogona\-li\-ty of the elements of $G$. 

\begin{theorem}\label{bounds}
Each polynomial $g \in\reals[x]$ is $s$-orthogonal with $\deg(g)-1 \le s< 2\deg(g)$.
\end{theorem}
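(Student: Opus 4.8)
The plan is to rephrase $s$-orthogonality as a statement about the degree up to which $g$ is orthogonal, and then read off both inequalities almost directly. Write $m=\deg(g)$ and use that $\expectof{f(X)g(X)}=\scalarat{\lambda}{f}{g}$. For a nonzero $f$ one has $fg\in\reals[x]_{\le s}$ exactly when $\deg(f)\le s-m$, so the defining implication of $s$-orthogonality holds for a given integer $s_0$ precisely when $g$ is orthogonal to all of $\reals[x]_{\le s_0-m}$ (with the convention $\reals[x]_{\le -1}=\{0\}$). I would first record that this property is inherited by every smaller integer: if $fg\in\reals[x]_{\le s}$ with $s\le s_0$ then $fg\in\reals[x]_{\le s_0}$ as well, so its expectation vanishes. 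Hence the set of admissible values is an initial segment of the integers, and the quantity $s$ to be bounded is exactly its maximum.

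For the lower bound $s\ge m-1$ I would take $s_0=m-1$, so that $s_0-m=-1$. Then the only product $fg$ of degree at most $s_0$ is $fg=0$, whose expectation is $0$ trivially. Thus $m-1$ is admissible, and by the initial-segment remark $s\ge m-1=\deg(g)-1$.

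For the upper bound $s<2m$ I would show that $s_0=2m$ is \emph{not} admissible by exhibiting a single counterexample, namely $f=g$. Indeed $g^2=fg$ has degree $2m$, so $g^2\in\reals[x]_{\le 2m}$, whereas $\expectof{g(X)^2}=\normof{g}_\lambda^2>0$ because $g\neq 0$ and the inner product $\scalarat{\lambda}{\cdot}{\cdot}$ is positive definite in the product-measure setting of this section. So the implication fails at $2m$, and since the admissible values form an initial segment, it fails at every integer $\ge 2m$; therefore $s<2m=2\deg(g)$.

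The only point needing care, and the mildest of obstacles, is the bookkeeping that converts these two single-value observations into bounds on the maximum $s$: admissibility at $s_0=m-1$ forces $s\ge m-1$ only because admissibility propagates downward, while the counterexample at $s_0=2m$ rules out all larger values only because inadmissibility propagates upward. Both of these monotonicity facts, together with the strict positivity $\normof{g}_\lambda^2>0$ for $g\neq 0$, are exactly what make the argument work; no genuine computation is required.
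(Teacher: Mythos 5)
Your proof is correct and follows essentially the same route as the paper: the lower bound comes from the observation that $fg\in\reals[x]_{\le\deg(g)-1}$ forces $f=0$, and the upper bound from $\expectof{g(X)^2}=\normof{g}_\lambda^2>0$ (which the paper just spells out via the Fourier expansion $\sum_\alpha c_\alpha(g)^2\normof{\pi_\alpha}^2$). Your explicit bookkeeping about admissibility propagating downward and inadmissibility upward is a welcome clarification of what the paper leaves implicit in the word ``maximum,'' though note that for the lower bound the mere membership of $\deg(g)-1$ in the admissible set already bounds the maximum from below, without appeal to monotonicity.
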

\begin{proof}
Since $fg \in \reals[x]_{\le (\deg(g)-1)}$ if and only if $f$ is the identically zero polynomial, then $fg \in \reals[x]_{\le (\deg(g)-1)}$ implies $\expect_{\lambda}\left( f(X)g(X)\right) =0$, that is  $g$ is always $s$-orthogonal with $s\ge \deg(g)-1$. Moreover, $s < 2\deg(g)$; in fact, $g^2$ belongs to $ \reals[x]_{\le 2\deg(g)}$ and, from the orthogonality of the polynomials $\pi_\alpha$, we have
$$\expect_{\lambda}\left( g(X)g(X)\right) = \sum_{\alpha} (c_\alpha(g))^2 \|\pi_\alpha\|^2$$
As $g$ is not identically zero, then $\expect_{\lambda}\left( g(X)g(X)\right) \neq 0$ and so $g$ cannot be $2\deg(g)$-orthogonal. \qed   \end{proof}

The following theorem shows how to detect the $s$-ortho\-go\-na\-lity of a polynomial analysing its Fourier coefficients.

\begin{theorem}\label{pol_s_ort}
Let $ g = \sum_{\alpha} c_\alpha(g) \pi_\alpha$  the Fourier expansion of a polynomial~$g\in\mathbb R[x]$.
\begin{enumerate}
\item If  $c_0(g) \neq 0$, the polynomial $g$ is $(\deg(g)-1)$-orthogonal. 
\item If  $c_0(g) =0$, the polynomial  $g$ is $s$-orthogonal where
$s$  is   such that $ c_\alpha(g) = 0$ for all $\alpha$ s.t. $ \sum_{i=1}^d \alpha_i \le s-\deg(g)$.
\end{enumerate}
\end{theorem}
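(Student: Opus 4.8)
The plan is to reduce everything to the orthogonality relation $\expect_{\lambda}\left(\pi_\alpha \pi_\beta\right) = \normof{\pi_\alpha}_\lambda^2 \, \delta_{\alpha\beta}$, which holds for the product system $\{\pi_\alpha\}$, exactly as used in the proof of Theorem~\ref{mio}. Writing $f = \sum_\alpha c_\alpha(f) \pi_\alpha$ and $g = \sum_\alpha c_\alpha(g) \pi_\alpha$ and expanding by bilinearity, I would first record the identity
\begin{equation*}
\expect_{\lambda}\left( f(X) g(X) \right) = \sum_{\alpha} c_\alpha(f) \, c_\alpha(g) \, \normof{\pi_\alpha}_\lambda^2 ,
\end{equation*}
so that the expectation of the product only sees the Fourier coefficients that $f$ and $g$ share. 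Next I would translate the hypothesis $fg \in \reals[x]_{\le s}$ into a degree condition on $f$: since $\reals[x]$ is a domain there is no cancellation of leading forms, hence $\deg(fg) = \deg(f) + \deg(g)$ and, writing $m = \deg(g)$, the membership $fg \in \reals[x]_{\le s}$ is equivalent to $\deg(f) \le s - m$.

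The core step is then to let the test polynomials $f$ range freely. Because $\{\pi_\alpha : |\alpha| \le k\}$ is a vector-space basis of $\reals[x]_{\le k}$, the polynomials $f$ with $\deg(f) \le s-m$ are precisely the arbitrary linear combinations of the $\pi_\alpha$ with $|\alpha| \le s-m$, and their coefficients $c_\alpha(f)$ can be chosen independently. Consequently the requirement that $\expect_{\lambda}\left(f(X)g(X)\right)=0$ for every admissible $f$ becomes a linear functional in the free variables $c_\alpha(f)$, $|\alpha|\le s-m$, that must vanish identically; since each $\normof{\pi_\alpha}_\lambda^2 > 0$, this is equivalent to $c_\alpha(g) = 0$ for all $\alpha$ with $|\alpha| \le s-m$. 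This single equivalence drives both items.

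For Item~1, if $c_0(g) \ne 0$ I would simply test $f \equiv 1$: then $fg = g \in \reals[x]_{\le m}$ while $\expect_{\lambda}\left(f(X)g(X)\right) = c_0(g)\normof{\pi_0}_\lambda^2 = c_0(g) \ne 0$, so the defining implication already fails at $s = m$; it holds vacuously at $s = m-1$ (only $f \equiv 0$ is admissible), and Theorem~\ref{bounds} guarantees $s \ge m-1$, so $g$ is exactly $(m-1)$-orthogonal. For Item~2, with $c_0(g)=0$, I would set $k^\star = \min\{\,|\alpha| : c_\alpha(g)\ne 0\,\}\ge 1$ and invoke the equivalence above: the implication holds precisely when $s - m \le k^\star - 1$, so the maximal such $s$ is $s = m + k^\star - 1$, which is exactly the value characterised by $c_\alpha(g)=0$ for all $\alpha$ with $\sum_{i=1}^d \alpha_i \le s - \deg(g)$. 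I do not expect a genuine obstacle here; the only points demanding care are the bookkeeping that identifies $s$-orthogonality with a \emph{maximum}, so that both the holding of the implication below $s$ and its failure just above $s$ must be verified, and the legitimacy of treating the $c_\alpha(f)$ as free parameters, which rests on the basis property of $\{\pi_\alpha\}$.
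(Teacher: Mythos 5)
Your proposal is correct and follows essentially the same route as the paper's proof: Item~1 is handled by testing $f\equiv 1$ and invoking Theorem~\ref{bounds}, and Item~2 by reducing $\expect_{\lambda}(f(X)g(X))$ via orthogonality to $\sum_{|\alpha|\le s-\deg(g)} c_\alpha(f)c_\alpha(g)\normof{\pi_\alpha}_\lambda^2$ and letting the coefficients of $f$ vary freely. Your extra bookkeeping with $k^\star$ just makes explicit the maximality of $s$ that the paper leaves implicit.
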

\begin{proof}
\begin{enumerate}
\item  Let $f$ be a constant polynomial, $f\equiv 1$. We have that $fg \in  \reals[x]_{\le \deg(g)}$ implies  
$$\expect_{\lambda}\left( f(X)g(X)\right) = \expect_{\lambda}\left( g(X)\right) = c_0(g) \neq 0$$
and so $g$ is not $\deg(g)$-orthogonal. From Theorem~\ref{bounds} we conclude that $g$ is $(\deg(g)-1)$-orthogonal.
\item Let  $s < 2 \deg(g)$ and let $f$ be a polynomial such that $fg \in  \reals[x]_{\le s}$. Since  $\deg(f) \le s-\deg(g)$ the Fourier expansion of $f$ is such that  $f=\sum_{|\alpha| \le s-\deg(g)} c_\alpha(f) \pi_\alpha$.
From the orthogonality of the polynomials $\pi_\alpha$  and from  $ s-\deg(g) < \deg(g)$ we have 
$$\expect_{\lambda}\left(f(X) g(X)\right)  = \sum_{|\alpha| \le s-\deg(g)} c_\alpha(f) c_\alpha(g) \|\pi_\alpha\|^2$$
The generality of $f$ implies that  $\expect_{\lambda}\left( f(X)g(X)\right) =0$ only if 
$c_\alpha(g) = 0$ for each $\alpha$ such that $ |\alpha|\le s-\deg(g)$.
\end{enumerate}
\qed   \end{proof}

\begin{corollary}\label{finale_grado}
Given a finite set of distinct points $\design\in\mathbb R^d$ and a degree compatible term ordering $\sigma$ on $\mathbb R[x]$, let $G$ be the reduced $\sigma$-Gr\"obner basis of $\mathcal I(\design)$. Then the maximum integer $s_c$ 
such that $\mathbb R[x]_{\leq s_c}$ is in  $\mathcal E_\sigma$   is 
$$ s_c= \min_{g\in G} s(g)$$ where $s(g)$ is such that the Fourier expansion of each $g \in G$ is given by
$$g=\sum_{\alpha} c_\alpha(g) \pi_\alpha $$
with $c_\alpha(g) = 0 $ forall $\alpha$ such that $\sum_{i=1}^d \alpha_i  \le  s(g)-\deg(g)$.
\end{corollary}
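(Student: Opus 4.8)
The plan is to obtain the Corollary by chaining together the three results that precede it, so that no genuinely new computation is needed; the whole content is bookkeeping about where the Fourier coefficients of the Gr\"obner basis elements first fail to vanish. First I would invoke Theorem~\ref{Teo_Grado}: the equivalence of its conditions (1) and (3) shows that the largest integer $s$ with $\reals[x]_{\leq s}\subset\mathcal E_\sigma$ is precisely the largest $s$ for which the reduced $\sigma$-Gr\"obner basis $G$ is $s$-orthogonal. This converts the assertion about the cubature degree $s_c$ into an assertion about the $s$-orthogonality of $G$. Unwinding the definition of $s$-orthogonality for a \emph{set}, $G$ is $s$-orthogonal exactly when $s=\min_{g\in G}s(g)$, with $s(g)$ the $s$-orthogonality degree of the individual polynomial $g$; hence already $s_c=\min_{g\in G}s(g)$, and it remains only to identify each $s(g)$ with the Fourier-coefficient quantity in the statement.

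For that identification I would apply Theorem~\ref{pol_s_ort}, handling its two cases uniformly. If $c_0(g)\neq 0$, the requirement that $c_\alpha(g)=0$ whenever $\sum_{i=1}^d\alpha_i\leq s(g)-\deg(g)$ fails already at $\alpha=0$ unless $s(g)-\deg(g)<0$; the largest admissible value is therefore $s(g)=\deg(g)-1$, which matches Item~1 of Theorem~\ref{pol_s_ort} and, by the lower bound in Theorem~\ref{bounds}, cannot be smaller. If instead $c_0(g)=0$, the constraint at $\alpha=0$ is met automatically and the maximal $s(g)$ is controlled by the lowest-degree nonvanishing Fourier coefficient of $g$, which is Item~2 of Theorem~\ref{pol_s_ort}. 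In both situations the single condition ``$c_\alpha(g)=0$ for all $\alpha$ with $\sum_{i=1}^d\alpha_i\leq s(g)-\deg(g)$'' correctly pins down $s(g)$ as the maximal integer for which $g$ is $s(g)$-orthogonal.

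The nearest thing to an obstacle is precisely this unification: verifying that the one Fourier condition subsumes both cases of Theorem~\ref{pol_s_ort}, and in particular that $c_0(g)\neq 0$ forces $s(g)-\deg(g)<0$ so that the value $\deg(g)-1$ dictated by Theorem~\ref{bounds} is recovered. Once this is checked, combining $s_c=\min_{g\in G}s(g)$ with the Fourier characterisation of the individual $s(g)$ gives the displayed formula and finishes the proof.
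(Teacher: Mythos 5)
Your proposal is correct and follows essentially the same route as the paper: the paper's (much terser) proof likewise reduces the claim to $s_c$-orthogonality of $G$ via Theorem~\ref{Teo_Grado} and then cites Theorem~\ref{pol_s_ort} for the $s(g)$-orthogonality of each $g\in G$. Your extra care in checking that the single Fourier condition subsumes both cases of Theorem~\ref{pol_s_ort} (in particular that $c_0(g)\neq 0$ forces $s(g)=\deg(g)-1$) is a useful elaboration of a step the paper leaves implicit, but it is not a different argument.
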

\begin{proof}
By Theorem~\ref{Teo_Grado} the thesis follows if $G$ is $s_c$-orthogonal. But this is true since, from Theorem~\ref{pol_s_ort}, we have that  each $g \in G$ is $s(g)$-orthogonal.
\qed   \end{proof}

%\begin{example}
%Given the set of points
%$$ \design=\{ (\sqrt{2},0),\;(1,1),\; (0,\sqrt{2}),\; (-1,1),\;(-\sqrt{2},0),\;(0,-\sqrt{2})\}$$
%and the degree compatible term ordering $\sigma=DegLex$, then $G$ is the reduced $\sigma$-Gr\"obner basis of $I(\design)$,  expressed by means the Hermite polynomials:  
%\[
%G=\left \{ \begin{array}{rcl}
%g_1 & = &   H_2(x) + H_2(y) \\
%g_2 & = &   H_1(x) H_2(y) - H_1(x) H_1(y) + H_1(x)\\
%g_3 & = &   H_4(y) -H_3(y) +4H_2(y) -H_1(y) +1
%\end{array}\right.
%\]  
%We have that:\\
%$\bullet$ since $c_\alpha(g_1)=0$ for each  $|\alpha| \le 1$ then $s(g_1)-\deg(g_1)=1$ and so $s(g_1)=3$;\\
%$\bullet$ since $c_\alpha(g_2)=0$ for each  $|\alpha| \le 0 $ then $s(g_2)-\deg(g_2)=0$ and so $s(g_2)=3$;\\
%$\bullet$ since $c_0(g_3) \neq 0$ then $s(g_3)=\deg(g_3)-1=3$. 
%
%\noindent
%Since $\min\{s(g_1),s(g_2),s(g_3)\} =3$ the set $G$ is $3$-othogonal, and so,  from Theorem~\ref{finale_grado}, it follows that the maximum degree of the polynomials in $\mathcal E_\sigma$ is $3$.
%\end{example}

\begin{example}
Given the set of points
$$ \design=\{ (-1,0),\;(-1,-2),\; (1,-1+\sqrt{3}),\; (1,-1-\sqrt{3}),\;(2,1)\}$$
and the degree compatible term ordering $\sigma$$=${\tt{DegLex}}, then 
by e.g. the algorithm in Table~\ref{BuchbergerMollerForOrthogonalPolynomial} 
the reduced $\sigma$-Gr\"obner basis $G$ of $\mathcal I(\design)$ can be written as 
\[
G=\left \{ \begin{array}{rcl}
g_1 & = &   H_2(y) -H_1(x) +2H_1(y) \\
g_2 & = &   H_2(x) H_1(y) - H_2(x)\\
g_3 & = &   H_3(x) -2H_2(x) +2H_1(x) 
\end{array}\right.
\]  
Since for each $g \in G$,  $s(g)$ is such that $c_\alpha(g)=0$ for each $\alpha$ s.t. $\sum_{i=1}^d \alpha_i \le s(g)-\deg(g)$, we have that $s(g_1)=2$, $s(g_2)=4$ and $s(g_3)=3$.  It follows that  $G$ is $s_c$-orthogonal, with $s_c=\min\{s(g_1),s(g_2),s(g_3)\} =2$ and so, from Corollary~\ref{finale_grado} it follows that the maximum integer $s$ such that  $\mathbb R[x]_{\leq s}\subset \mathcal E_\sigma$ is~$2$.
%$s(g_1)=2$ since $c_\alpha(g_1)=0$ for each $\alpha$ s.t. $\sum_{i=1}^d \alpha_i \le 0 = s(g_1)-\deg(g_1)$,   $s(g_2)=4$ since $c_\alpha(g_2)=0$ for each $\alpha$ s.t. $\sum_{i=1}^d \alpha_i \le 1= s(g_2)-\deg(g_2)$ and  $s(g_3)=3$ since  $c_\alpha(g_3)=0$ for each $\alpha$ s.t. $\sum_{i=1}^d \alpha_i\le 0 =s(g_3)-\deg(g_3)=0$.
\end{example}

\begin{example}
For $\design$ of Example~\ref{ex_zero_exp} and $\sigma$$=${\tt{DegLex}}, the reduced $\sigma$-Gr\"obner basis $G=\{g_1,g_2,g_3\}$ of $\mathcal I(\design)$ is such that  $s(g_1)=1$, $s(g_2)=2$ and $s(g_3)=3$ since $c_0(g) \neq 0$ for all $g\in G$.  It follows that  $G$ is $1$-orthogonal and so the cubature formula w.r.t. $\sigma$ and $\design$ is exact for all the polynomials with degree $0$ and $1$. Nevertheless let us remark that the cubature formula is exact for a much larger class of polynomials as shown in Theorem~\ref{mio} and in  Example~\ref{ex_zero_exp}.
\end{example}  

\section{Conclusion}

In this paper we mixed tools from Computational Commutative Algebra,  orthogonal polynomial theory and Probability to address the recurrent statistical problem of estimation of mean values of polynomial functions. 
Our work shares great similarity with applications of computational algebra to design and analysis of experiments, which inspired us with a non-classical viewpoint to cubature formulae.

We obtained two main results.
In the Gaussian case we obtained a system of polynomial equations whose solution gives the weights of a quadrature formula (Theorem~\ref{finalHermite}).
For a  finite product measure which admits an orthogonal system of polynomials, we characterise the set of polynomials with the same mean value. This depends substantially from Equation~(\ref{cond}) and it is in terms of Fourier coefficients of particular polynomials obtained by adapting 
Gr\"obner basis theory.

 %%%%%%%%%%%%%%%%%%%%%%%%%%%%%%%%%%%%%%%%%%%%%%%%%%%%%%%%%%
\section{Appendix: proofs} \label{Proofs}

{\bf Theorem~\ref{Correspondence_Monomials_HermitePolynomials}:}
\begin{proof} 1. 
The proof is by induction on the monomial degree~$k$.
From the three terms recurrence formula $\pi_{j+1} = (\gamma_j x -\alpha_j) \pi_j - \beta_j \pi_{j-1}$ we have
$$x \pi_{j} =\frac{\pi_{j+1}}{\gamma_j}+\frac{\alpha_j}{\gamma_j} \pi_j + \frac{\beta_j}{\gamma_j} \pi_{j-1}$$
For $k=0$ we have $x^0=\pi_0(x)=c_0(x^0) \pi_0$. For  $k=1$ from the three terms recurrence formula we have
$$x=x \pi_0= \frac{\pi_1}{\gamma_0} + \frac{\alpha_0}{\gamma_0}\pi_0 = c_1(x) \pi_1 +c_0(x) \pi_0 $$
In the inductive step the thesis holds for $k$ and we prove it for $k+1$. 
From the three terms recurrence formula we have
\begin{align*}
x^{k+1} =&  x x^k =  \sum_{j=0}^{k} c_j(x^k) x\pi_j \\
=& \sum_{j=0}^{k} c_j(x^k) \left( \frac{\pi_{j+1}}{\gamma_j}+\frac{\alpha_j}{\gamma_j} \pi_j + \frac{\beta_j}{\gamma_j} \pi_{j-1}\right )\\
=& \sum_{j=1}^{k+1}\frac{c_{j-1}(x^k)}{\gamma_{j-1}}  \pi_j 
       +  \sum_{j=0}^{k}c_j(x^k)\frac{\alpha_j}{\gamma_j}  \pi_j +  \sum_{j=0}^{k-1}c_{j+1}(x^k)\frac{\beta_{j+1}}{\gamma_{j+1}}  \pi_j \\
=& \sum_{j=1}^{k-1}\left(\frac{c_{j-1}(x^k)}{\gamma_{j-1}} 
      + c_j(x^k) \frac{\alpha_j}{\gamma_j} + c_{j+1}(x^k)\frac{\beta_{j+1}}{\gamma_{j+1}} \right )  \pi_j \\ 
&+ \frac{c_{k-1}(x^k)}{\gamma_{k-1}} \pi_k 
      +\frac{c_{k}(x^k)}{\gamma_{k}} \pi_{k+1}\\
&+ \frac{c_0(x^k)\alpha_0}{\gamma_0} \pi_0 +  \frac{c_{k}(x^k)\alpha_k}{\gamma_{k}} \pi_k + \frac{c_1(x^k)\beta_1}{\gamma_1} \pi_0 \\
=& \sum_{j=1}^{k-1} c_j(x^{k+1}) \pi_j +c_{k+1}(x^{k+1}) \pi_{k+1} \\
&+  \left(\frac{c_{k-1}(x^k)}{\gamma_{k-1}} +\frac{c_k(x^k) \alpha_k}{\gamma_k} \right) \pi_k \\ &+ \left(\frac{c_0(x^k)\alpha_0}{\gamma_0} + \frac{c_1(x^k) \beta_1}{\gamma_1} \right) \pi_0 
\end{align*}
This concludes the proof of the first part of the theorem. 

To prove the second part we apply what we just proved and unfold the multiplication. 
Given $x^\alpha=x_1^{\alpha_1}\cdots x_d^{\alpha_d}$, the polynomial  $\pi_\alpha= \pi_{\alpha_1}(x_1) \cdots \pi_{\alpha_d}(x_d) $ is the product 
of $d$ univariate polynomials $\pi_{\alpha_j}$ each of degree $\alpha_j$ in $x_j$, $j=1,\ldots,d$. Clearly if $\alpha_j=0$ then $\pi_{\alpha_j}=1$ and $x_j$ does not divide $x^\alpha$. Furthermore we have
  \begin{eqnarray*}
\pi_\alpha=  \prod_{j=1}^d  \sum_{k =0}^{\alpha_j} d_{k}^{(j)} x_j^{k}  \end{eqnarray*}
We deduce that $\pi_\alpha$ is a linear combination of $x^\alpha$ and of the power products which divide $x^\alpha$, that is of power products $x^\beta$ with $\beta \le \alpha$ component wise.
 Vice versa, applying the first part of the theorem we have  
 \begin{eqnarray*}
x^\alpha=\prod_{k=1}^d x_k^{\alpha_k} =\prod_{k=1}^d\left[\sum_{j_k =0}^{\alpha_k} c_{j_k}(x_k^{\alpha_k}) \pi_{j_k}(x_k) \right] 
 \end{eqnarray*}
 and commuting product with sum shows that
$x^\alpha$ is a linear combination of products of $\pi_{\beta_i}(x_i)$ where 
$\beta=(\beta_1,\ldots,\beta_d)$ is such that $\beta \le \alpha$ component wise, that is
$x^\beta$ divides $x^\alpha$.
\qed   \end{proof}

\medskip
{\bf{Theorem~\ref{RemainderOverHermiteBasis}:}}
\begin{proof}
Recall that $\mathcal B$ and $\mathcal {OB}$ are  defined in terms of a common set $L$ of $d$-dimensional vectors with non-negative integer entries satisfying the property of `factor-closeness', that is if $(\alpha_1,\ldots,\alpha_d)\in L$ %then $(\beta_1,\ldots,\beta_d)\in L$ if and only if $\beta_i\leq \alpha_i$ for all $i=1,\ldots,d$.
and if $\beta_i\leq \alpha_i$ for all $i=1,\ldots,d$ then $(\beta_1,\ldots,\beta_d)\in L$.

\begin{enumerate}
\item 
If  $x^\alpha\in \mathcal B$ for some $\alpha$, then
from Theorem~\ref{Correspondence_Monomials_HermitePolynomials} 
$x^\alpha =  \sum_{\beta \le \alpha} b_\beta \pi_\beta $ follows.
Since $\beta \le \alpha$ then $\beta \in L$ and so each  $\pi_\beta\in \mathcal{OB}$: we  have that  $x^\alpha$ belongs to $\operatorname{Span}(\mathcal{OB})$. 
The vice versa is proved analogously.

\item The matrix  $[\pi_\beta(z) ]_{z\in \design,  \beta \in L} $ is a square matrix since $ L$ has as many elements as $\design$ and has  full rank. The linear independence of the columns of this matrix 
follows from the fact that each linear combination of its  columns corresponds to a polynomial in 
$\operatorname{Span}(\mathcal {OB})$ which coincides with $\operatorname{Span}(\mathcal B)$. 

Any polynomial  $g\in G$  can be written as
\begin{eqnarray*}
g= x^\alpha - \sum_{\alpha >_\sigma \beta \in L } c_\beta x^\beta
\end{eqnarray*}
where $x^\alpha=LT(g)$ is a multiple by some $x_j$ of an element of $\mathcal B$. 
By Theorem~\ref{Correspondence_Monomials_HermitePolynomials}  we have
\begin{eqnarray*}
g= \sum_{\gamma \le \alpha} a_\gamma ^{(g)} \pi_\gamma -  \sum_{\alpha >_\sigma \beta \in L } c_\beta \sum_{\gamma \le \beta } d_\gamma^{(g)} \pi_\gamma 
\end{eqnarray*}
The polynomial $\pi_\alpha$ appears only in the first sum with the coefficient $1$.
For the other terms in the first sum observe that 
as $\gamma < \alpha$ then $\gamma \in L$ and  also $\gamma <_\sigma \alpha $.
Analogously, for the second sum consider  $ \gamma \le \beta < \alpha$; since $\beta \in L$ then  $\gamma \in L$ and since $\gamma<\alpha$ then  $\gamma<_\sigma \alpha$.
And so, with obvious notation, 
\begin{eqnarray*}
g= \pi_\alpha -\sum_{ \alpha >_\sigma \beta \in L } b_\beta \pi_\beta
\end{eqnarray*} 

Since $g(z)=0$ for $z\in \design$, then the vector $b=[b_\beta]_{\beta}$ of the coefficients in the identity above 
solves the linear system  $[\pi_\beta(z)]_{z\in \design, \alpha >_\sigma \beta \in L  }b = [ \pi_\alpha(z)]_{z\in \design}$. 
Furthermore, since $[\pi_\beta(z)]_{z\in \design, \alpha >_\sigma \beta \in L  }$ is a full rank matrix, then $b$ is the unique solution of such a system.

\item Let $p^*= \sum_{\beta \in L} a_\beta \pi_\beta $ be the polynomial whose coefficients are   the solution of the linear system 
\[ [\pi_\beta(z)]_{z\in \design, \beta \in L}\,\, a  =  [p(z)] _{z\in \design}. \]
Such a polynomial obviously interpolates the values $p(z)$, $z\in \design$, and, since the columns of $[\pi_\beta(z)]_{z\in \design, \beta \in L}  $ are the evaluation vectors of the elements of $\mathcal {OB}$ at $\design$, it belongs to  $\operatorname{Span}(\mathcal {OB})$. 
We conclude that  $p^*$ is the unique polynomial belonging to $\operatorname{Span}(\mathcal {OB})$ which interpolates the values  $p(z)$, $z\in \design$.
\end{enumerate}
\qed   \end{proof}

{\bf Theorem~\ref{mio}:}
\begin{proof}
As $G$ is a basis of  $\mathcal I(\design)$, then for every  $p\in \mathcal I(\design)$  and $g\in G$
there exist $q_g\in \reals[x]$ such that  $p= \sum_{g\in G} q_g g $. Since by linearity
\begin{eqnarray*}
\expect_{\lambda} \left( \sum_{g\in G} q_g g \right)  = \sum_{g\in G}  \expect_{\lambda} \left( q_g g \right) 
\end{eqnarray*}
the thesis follows once we show that, for each $g\in G$ and $x^{\alpha}=LT(g)$
\begin{eqnarray*}
\expect_{\lambda} \left( q_g g \right) =  
 \| \pi_{\alpha} \|_\lambda^2 c_{\alpha}(q_g) 
 - \sum_{\alpha >_\sigma \beta \in L} c_\beta(q_g) c_\beta(g)\|\pi_\beta\|_\lambda^2 
\end{eqnarray*}
holds. From Equation~(\ref{strutturaq}) we have
\begin{equation*}
q_g g =\sum_{ \beta }        c_\beta(q_g)  \pi_\beta g  
\end{equation*}
and we substitute the Fourier expansion of $g$  given in Theorem~\ref{RemainderOverHermiteBasis} 
\begin{equation*}
g= \pi_{\alpha} - \sum_{\alpha >_\sigma \beta \in L} c_\beta(g) \pi_\beta
\end{equation*}
In computing the expectation we use the fact that $ \expect_{\lambda} \left( \pi_h \pi_k  \right) =0$ 
for different $h$ and $k$. Then the expectation of  $c_\beta(q_g)  \pi_\beta g$ vanishes if $\beta >_\sigma \alpha$ or 
$\beta <_\sigma \alpha$, $\beta \notin L$, the expectation of $c_{\alpha}(q_g)  \pi_{\alpha} g$
gives $ \| \pi_{\alpha} \|_\lambda^2 c_\alpha(q_g)$ and, if $\alpha >_\sigma \beta \in L$, the expectation of  $c_\beta(q_g)  \pi_\beta g$  gives 
$-c_\beta(q_g) c_\beta(g)\|\pi_\beta\|_\lambda^2 $. 
\qed   \end{proof}

\section*{Acknowledgments}
\label{sec:acknowledgments}
 G. Pistone is supported by de Castro Statistics Initiative, Collegio Carlo Alberto, Moncalieri Italy.  E. Riccomagno worked on this paper while visiting the Department of Statistics, University of Warwick, and the Faculty of Statistics at TU-Dortmund on a DAAD grant. Financial support  is gratefully acknowledged. The authors thank Prof. H. P. Wynn, Prof. G. Monegato (Politecnico di Torino) and Prof. Dr. Hans Michael M\"oller (Technische Universit\"at -- Dortmund) for their useful suggestions.

%\bibliography{paper}

% \bib, bibdiv, biblist are defined by the amsrefs package.
\begin{bibdiv}
\begin{biblist}

\bib{CocoaSystem}{misc}{
      author={{CoCoA}Team},
       title={{{\hbox{\rm C\kern-.13em o\kern-.07em C\kern-.13em o\kern-.15em
  A}}}: a system for doing {C}omputations in {C}ommutative {A}lgebra},
        note={Available at \/ {\tt http://cocoa.dima.unige.it}},
}

\bib{MR2290010}{book}{
      author={Cox, David},
      author={Little, John},
      author={O'Shea, Donal},
       title={Ideals, varieties, and algorithms},
     edition={Third},
      series={Undergraduate Texts in Mathematics},
   publisher={Springer},
     address={New York},
        date={2007},
 %       ISBN={978-0-387-35650-1; 0-387-35650-9},
    %     url={http://dx.doi.org/10.1007/978-0-387-35651-8},
       % note={An introduction to computational algebraic geometry and commutative algebra},
%      review={\MR{2290010 (2007h:13036)}},
}

\bib{DrtonSullivantSturmfelsLectureNotes}{book}{
      author={Drton, Mathias},  
      author={Sturmfels, Bernd}, 
      author={Sullivant, Seth},
       title={Lectures on Algebraic Statistics},
   publisher={Birkh\"auser},
     address={Basel},
        date={2009},
}

\bib{MR1772046}{article}{
      author={Fontana, Roberto},
      author={Pistone, Giovanni},
      author={Rogantin, Maria~Piera},
       title={Classification of two-level factorial fractions},
        date={2000},
        ISSN={0378-3758},
     journal={J. Statist. Plann. Inference},
      volume={87},
      number={1},
       pages={149\ndash 172},
%         url={http://dx.doi.org/10.1016/S0378-3758(99)00173-1},
%      review={\MR{1772046 (2001k:62094)}},
}

\bib{MR2061539}{book}{
      author={Gautschi, Walter},
       title={Orthogonal polynomials: computation and approximation},
      series={Numerical Mathematics and Scientific Computation},
   publisher={Oxford University Press},
     address={New York},
        date={2004},
%        ISBN={0-19-850672-4},
%        note={Oxford Science Publications},
%      review={\MR{2061539 (2005e:42001)}},
}

\bib{MR2640515}{book}{
      editor={Gibilisco, Paolo},
      editor={Riccomagno, Eva},
      editor={Rogantin, Maria~Piera},
      editor={Wynn, Henry~P.},
       title={Algebraic and geometric methods in statistics},
   publisher={Cambridge University Press},
     address={Cambridge},
        date={2010},
%        ISBN={978-0-521-89619-1},
%      review={\MR{2640515 (2011a:62007)}},
}

\bib{MR1335234}{book}{
      author={Malliavin, Paul},
       title={Integration and probability},
      series={Graduate Texts in Mathematics},
   publisher={Springer-Verlag},
     address={New York},
        date={1995},
      volume={157},
%        ISBN={0-387-94409-5},
%         url={http://dx.doi.org/10.1007/978-1-4612-4202-4},
%        note={With the collaboration of H{\'e}l{\`e}ne Airault, Leslie Kay and
%  G{\'e}rard Letac, Edited and translated from the French by Kay, With a
%  foreword by Mark Pinsky},
%      review={\MR{1335234 (97f:28001a)}},
}

\bib{MR680050}{incollection}{
      author={M{\"o}ller, H.~M.},
      author={Buchberger, B.},
       title={The construction of multivariate polynomials with preassigned
  zeros},
        date={1982},
   booktitle={Computer algebra ({M}arseille, 1982)},
      series={Lecture Notes in Comput. Sci.},
      volume={144},
   publisher={Springer},
     address={Berlin},
       pages={24\ndash 31},
%      review={\MR{680050 (84b:12003)}},
}

\bib{peccatitaqqu}{book}{
      author={Peccati, Giovanni},
      author={Taqqu, Murad S.},
       title={Wiener Chaos: Moments, Cumulants and Diagrams},
   publisher={Bocconi \& Springer-Verlag},
   address={Italia},
        date={2011}, 
}

\bib{MR2332740}{book}{
      author={Pistone, Giovanni},
      author={Riccomagno, Eva},
      author={Wynn, Henry~P.},
       title={Algebraic statistics},
      series={Monographs on Statistics and Applied Probability},
   publisher={Chapman \& Hall/CRC, Boca Raton, FL},
        date={2001},
%      volume={89},
%        ISBN={1-58488-204-2},
%        note={Computational commutative algebra in statistics},
%      review={\MR{2332740 (2008f:62098)}},
}

\bib{MR1808151}{article}{
      author={Schoutens, Wim},
       title={Orthogonal polynomials in {S}tein's method},
        date={2001},
        ISSN={0022-247X},
     journal={J. Math. Anal. Appl.},
      volume={253},
      number={2},
       pages={515\ndash 531},
%         url={http://dx.doi.org/10.1006/jmaa.2000.7159},
%      review={\MR{1808151 (2001k:62021)}},
}

\bib{StroudSecrets}{book}{
    AUTHOR = {Stroud, A. H.},
    AUTHOR = {Secrest, Don},
     TITLE = {Gaussian quadrature formulas},
 PUBLISHER = {Prentice-Hall Inc.},
   ADDRESS = {Englewood Cliffs, N.J.},
      YEAR = {1966},
%     PAGES = {ix+374},
}

\bib{MR1768956}{article}{
      author={Xu, Yuan},
       title={Polynomial interpolation in several variables, cubature formul{\ae},
  and ideals},
        date={2000},
        ISSN={1019-7168},
     journal={Adv. Comput. Math.},
      volume={12},
      number={4},
       pages={363\ndash 376},
%         url={http://dx.doi.org/10.1023/A:1018989707569},
%        note={Multivariate polynomial interpolation},
%      review={\MR{1768956 (2001f:41008)}},
}

\bib{MR0907119}{article}{
   author={M{\"o}ller, H. Michael},
   title={On the construction of cubature formulae with few nodes using
   Groebner bases},
   conference={
      title={Numerical integration},
      address={Halifax, N.S.},
      date={1986},
   },
   book={
      series={NATO Adv. Sci. Inst. Ser. C Math. Phys. Sci.},
      volume={203},
      publisher={Reidel},
      place={Dordrecht},
   },
   date={1987},
   pages={177--192},
%   review={\MR{907119 (88j:65059)}},
}
	
\end{biblist}
\end{bibdiv}

\end{document}